\numberwithin{equation}{section}
\newcommand{\bbr}{\mathbb{R}}
\newcommand{\bbc}{\mathbb{C}}
\newcommand{\bbn}{\mathbb{N}}
\newcommand{\bbz}{\mathbb{Z}}
\newcommand{\zo}{\mathbb{Z}_0}
\newcommand{\bomega}{{\partial \Omega}}
\newcommand{\pa}{\partial}
\newcommand{\calh}{\mathcal{H}}
\newcommand{\grad}{\nabla}
\newcommand{\dudn}{{\dfrac{\pa u}{\pa \nu}}}
\newcommand{\dwdn}{{\dfrac{\pa w}{\pa \nu}}}
\newcommand{\lbot}{{L^2(\bomega \times [0,T])}}
\DeclareMathOperator{\sgn}{sgn}
\DeclareMathOperator{\re}{Re}
\newtheorem{theorem}{Theorem}[section]
\newtheorem{lemma}[theorem]{Lemma}
\newtheorem{proposition}[theorem]{Proposition}
\theoremstyle{definition}
\newtheorem{definition}[theorem]{Definition}
\theoremstyle{remark}
\title[Harm. Pf. of Observability for Wave and Visco-Elastic Eq.]{A Harmonic Analysis Proof of the Boundary Observability Inequality for the Wave Equation and Visco-Elastic Equation}
\author[W. Green]{Walton Green}
\address{Department of Mathematical Sciences, Clemson University, Clemson, South Carolina 29634}
\email{awgreen@clemson.edu, liul@clemson.edu, mmitkov@clemson.edu}
\author[S. Liu]{Shitao Liu}
\author[M. Mitkovski]{Mishko Mitkovski}
\keywords{Wave equation, observability, moment method, Riesz sequence, viscoelasticity, memory kernel}
\subjclass[2010]{Primary 93B07, 35L10; Secondary 35P10}
\begin{document}
\maketitle

\begin{abstract}
In this paper, we give a harmonic analysis proof of the Neumann boundary observability inequality for the wave equation in an arbitrary space dimension. Our proof is elementary in nature and gives a simple, explicit constant. %We also use these methods to extend our result to the observability of a visco-elastic wave equation. 
We also extend the method to prove the observability inequality of a visco-elastic wave equation.
\end{abstract}

\section{Introduction}

Let $\Omega \subseteq \bbr^d$ be an open, bounded domain with smooth boundary $\bomega$. Consider the following backwards wave equation generated at final time $T$.
\begin{equation} \label{waveeq}\left\{ \begin{array}{lr}
	w_{tt}(x,t)-\Delta w(x,t) =0 & \mbox{in }\Omega \times [0,T]\\
	w(x,T)=w_0(x) \quad w_t(x,T)=w_1(x) \hspace{5ex}& \mbox{in }\Omega \\
	w(x,t) = 0 & \mbox{on }\bomega \times [0,T]\end{array} \right.
\end{equation}
where $w_0 \in H^1_0(\Omega)$ and $w_1 \in L^2(\Omega)$. The problem we are interested in is the boundary observability inequality: There exists $c >0$ such that for all $(w_0,w_1) \in H^1_0(\Omega) \times L^2(\Omega)$,
	\begin{equation} \label{waveob} c \int_\Omega |\nabla w_0(x)|^2 + |w_1(x)|^2 \, dx \le \int_0^T \int_\bomega \left|\dfrac{\partial w}{\partial \nu} (x,t)\right|^2\, dS(x) \, dt \end{equation}
%where $\Gamma \subseteq \pa \Omega$.
It is well known (e.g. by the Hilbert Uniqueness Method \cite{lions88}) that the observability inequality (\ref{waveob}) is equivalent to the exact controllability of the dual equation to (\ref{waveeq}):
\begin{equation} \label{waveeqcontrol}\left\{ \begin{array}{lr}
	u_{tt}(x,t)-\Delta u(x,t) =0 & \mbox{in }\Omega \times [0,T]\\
	u(x,0)=u_0(x) \quad u_t(x,0)=u_1(x) \hspace{5ex}& \mbox{in }\Omega \\
	u(x,t) = f(x,t) & \mbox{on }\bomega \times [0,T] 
%	y(x,y) = 0 \pa\Omega \backslash \Gamma \times [0,T]
	\end{array} \right.
\end{equation}
for $f \in L^2(\bomega \times[0,T])$ and $(u_0,u_1) \in L^2(\Omega) \times H^{-1}(\Omega)$. Exact controllability refers to the question: Given initial states $(u_0,u_1)$ and final states $(u_T,u_T')$, does there exist $T>0$ and $f \in L^2(\bomega \times[0,T])$ such that $u(x,T) = u_T(x)$ and $u_t(x,T)=u_T'(x)$?

There exists an extensive body of literature about the exact boundary controllability for the wave equation (or other typed hyperbolic equations). The problem has been very well studied and we refer to the books \cite{komornik95,L-Tbook,Lionsbook,micu02}, and the references therein for a literature review of the problem. The typical method is to use the duality and transfer the controllability problem for the wave equation into the observability question of the dual problem.

Even though such equivalence between the controllability and observability was long noticed \cite{dolecki77}, it is not until the mid 80s that mathematicians started to develop systematic methods to prove the observability inequality, especially in the general multidimensional setting. It is well known by now that the observability inequality (\ref{waveob}) may be proved using microlocal analysis \cite{b-l-r}, the multiplier method \cite{ho86,lions88}, or Carleman estimates. The latter is especially powerful and has become a major tool to prove observability inequalities since it can deal with the situations where there are lower-order terms or variable principle coefficients appearing in the wave equation. We cite only a few references here \cite{gllt,ltz,zhang10} and refer to the other works quoted in these papers for readers who are interested in the details of this method.

The idea of using harmonic analysis to prove the observability inequality originated much earlier. It seems that Russell~\cite{russell78} was the first person who systematically explored the relationship between control problems and harmonic analysis. The moment method of Russell has been extended in different directions~\cite{hansen,komornik05,loreti12,pandolfi}, but the common feature of all these results has been the requirement for the space dimension to be equal to one. Probably, the most comprehensive treatment, to date, on the use of complex exponentials and harmonic analysis in control problems is the monograph~\cite{avdonin95} where, in addition, approximate controllability results (even in higher space dimension) are obtained using complex exponentials in concert with standard uniqueness results. Still, to the best of our knowledge, nobody has given a harmonic analysis proof of the exact controllability (and observability) of the wave equation in higher space dimensions. In this note, we complete this gap by providing a proof of the observability inequality in an arbitrary spatial dimension using the harmonic analysis method. %More precisely, this will be achieved through proving a Riesz sequence property that is equivalent to the observability inequality. 

We also apply this method to show observability for a wave equation with memory kernel, also known as the visco-elastic wave equation, which is of the form
	\begin{equation}\label{wavemem} y_{tt}-\Delta y = \int_0^t M(t-s) \Delta y(s) \, ds.\end{equation}
Our motivation is from \cite{loreti12,pandolfi15} in which exact controllability of $(y,y_t)$ is achieved but only for dimension $d \le 3$ using the moment method of Russell. In section \ref{visco} we extend this to an arbitrary space dimension. Carleman estimates have been applied to the heat equation with memory kernel (same as (\ref{wavemem}) but only first-order in time) \cite{fu09}, but we are not aware of the use of Carleman estimates to prove the observability for the visco-elastic equation (\ref{wavemem}). A separate noteworthy contribution to this problem is \cite{kim93} in which exact controllability is established in arbitrary dimension using the classical compactness-uniqueness argument.

The paper is organized as follows. In section \ref{harm}, we reformulate the observability inequality as a Riesz sequence property for a suitably chosen system of functions. This property is then established for the regular wave equation (\ref{waveeq}) in section \ref{main} and extended to the visco-elastic wave equation (\ref{wavemem}) in section \ref{visco}.

\section{Harmonic Analysis Reformulation}
\label{harm}
The standard one-dimensional moment method focuses on estimates concerning sequences of complex exponentials. In order to extend it to higher spatial dimensions, we need to consider the following system of functions.

Let $\{\phi_n\}_{n=1}^\infty$ be an orthonormal basis in $L^2(\Omega)$ of eigenfunctions of the Dirichlet Laplacian. In other words,
\begin{equation}\left\{\begin{array}{rclr} -\Delta \phi_n &=& \lambda_n^2 \phi_n & \mbox{in } \Omega\\[2mm]
							\phi_n&=& 0 &\mbox{on } \bomega\end{array} \right. \end{equation}
It is well known that $0 < \lambda_1 \le \lambda_2 \le \cdots$ and $\lambda_n \to \infty$. For simplicity, we set $\lambda_n = \text{sgn}(n)\lambda_{|n|}$ and
	\begin{equation}\label{psidef} \psi_n = \dfrac{1}{\lambda_n}\dfrac{\pa \phi_{|n|}}{\pa \nu} \quad \text{on }\bomega\end{equation}
for $n \in \bbz \backslash \{0\}$ (henceforth $\bbz_0$), denoting by $\nu(x)$ the outward normal vector to $\bomega$ at $x$.

This system of functions $\{\psi_n\}$ has been investigated as far back as the 1940's by Rellich \cite{rellich46}. More recently, Hassel and Tao~\cite{hassell02} reinvigorated the interest in this system from the harmonic analysis perspective. As a consequence, much more precise linear independence results about it were obtained (which extend the independence results that follow from the observability inequality)~\cite{barnett11,triggiani08,xu12}. Our treatment of the system $\{\psi_n\}$ is directly influenced by this more recent work. The following two features enter in our proof: 1) the system $\{\psi_n\}$ is linearly independent in certain small spectral windows (for close values of $\lambda_n$); 2) complex exponentials are independent for $\lambda_n$ with large enough gap. Thus, when combined together, we achieve independence without restrictions on $\lambda_n$. The notion of independence we use is stronger than the classical linear independence. It is given precisely by the notion of a Riesz-Fischer sequence (see e.g.~\cite{avdonin95}, \cite{young01}).
\begin{definition}
A sequence $\{e_n\}$ in a Hilbert space $\calh$ is said to be a Riesz-Fischer sequence if there exists a constant $c>0$ such that
	\begin{equation}\label{rfseq}
		c \sum |a_n|^2 \le \left\| \sum a_ne_n \right\|^2_\calh
	\end{equation}
for all finite sequences $\{a_n\}$.
\end{definition}

We now state the relationship between Riesz-Fischer sequences and the observability inequality. This relationship is well-known to the experts in the field, but since we were not able to find a good reference, we decided to include a short proof of it. 
\begin{proposition}\label{prop1}
The observability inequality (\ref{waveob}) holds for all $(w_0,w_1) \in H^1_0(\Omega) \times L^2(\Omega)$ if $\{\psi_ne^{i\lambda_nt}\}_{n \in \bbz_0}$ is a Riesz-Fischer sequence in $L^2(\bomega \times [0,T])$, i.e. there exists $c >0$ such that
	\begin{equation}\label{riesz} c \sum |a_n|^2 \le \int_0^T \int_\bomega \left| \sum a_n \psi_n(x) e^{i\lambda_n t}\right|^2 \, dS(x) \, dt \end{equation}
for all $\{a_n\}_{n \in \bbz_0} \in \ell^2$.
\end{proposition}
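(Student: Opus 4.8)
The plan is to expand a solution $w$ of the backwards wave equation \eqref{waveeq} in the eigenbasis $\{\phi_n\}$ and compute its normal derivative on the boundary, showing that the right-hand side of \eqref{waveob} is precisely (a constant multiple of) the right-hand side of \eqref{riesz} for a suitable coefficient sequence $\{a_n\}$, while the left-hand side of \eqref{waveob} is comparable to $\sum |a_n|^2$.

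First I would write the initial data as $w_0 = \sum_{k \ge 1} c_k \phi_k$ and $w_1 = \sum_{k \ge 1} d_k \phi_k$ in $L^2(\Omega)$, so that $\|w_0\|_{H^1_0}^2 = \sum_k \lambda_k^2 |c_k|^2$ and $\|w_1\|_{L^2}^2 = \sum_k |d_k|^2$; note $\int_\Omega |\nabla w_0|^2 = \|w_0\|_{H_0^1}^2$. The solution of \eqref{waveeq} running backwards from time $T$ is then, for $t \in [0,T]$,
\begin{equation*}
 w(x,t) = \sum_{k \ge 1} \Bigl( \alpha_k e^{i\lambda_k(t-T)} + \beta_k e^{-i\lambda_k(t-T)} \Bigr) \phi_k(x),
\end{equation*}
where $\alpha_k, \beta_k$ are determined by $\alpha_k + \beta_k = c_k$ and $i\lambda_k(\alpha_k - \beta_k) = d_k$; explicitly $\alpha_k = \tfrac12(c_k - i d_k/\lambda_k)$, $\beta_k = \tfrac12(c_k + i d_k/\lambda_k)$. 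Absorbing the phase $e^{-i\lambda_k T}$ and using the convention $\lambda_{-k} = -\lambda_k$, $\phi_{|n|}$, I would reindex over $n \in \bbz_0$: set $a_n = \lambda_n \tilde\alpha_n$ where $\tilde\alpha_n = \alpha_n e^{-i\lambda_n T}$ for $n>0$ and $\tilde\alpha_n = \beta_{|n|} e^{-i\lambda_n T}$ for $n<0$, so that $w(x,t) = \sum_{n \in \bbz_0} \frac{a_n}{\lambda_n} \phi_{|n|}(x) e^{i\lambda_n t}$. Differentiating termwise in the normal direction and using the definition \eqref{psidef}, $\psi_n = \tfrac1{\lambda_n} \tfrac{\pa \phi_{|n|}}{\pa\nu}$, gives
\begin{equation*}
 \frac{\pa w}{\pa \nu}(x,t) = \sum_{n \in \bbz_0} \frac{a_n}{\lambda_n} \frac{\pa \phi_{|n|}}{\pa \nu}(x) e^{i\lambda_n t} = \sum_{n \in \bbz_0} a_n \psi_n(x) e^{i\lambda_n t},
\end{equation*}
so that the right side of \eqref{waveob} equals $\int_0^T\int_\bomega \bigl| \sum_n a_n \psi_n(x) e^{i\lambda_n t}\bigr|^2 \, dS\, dt$.

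It then remains to compare $\sum_n |a_n|^2$ with the observability energy $\int_\Omega |\nabla w_0|^2 + |w_1|^2\, dx$. From $a_n = \lambda_n \tilde\alpha_n$ and the formulas for $\alpha_k, \beta_k$ one gets, for each $k \ge 1$, $|a_k|^2 + |a_{-k}|^2 = \lambda_k^2 (|\alpha_k|^2 + |\beta_k|^2) = \tfrac12(\lambda_k^2 |c_k|^2 + |d_k|^2)$, using that $\alpha_k, \beta_k$ are the real/imaginary combinations above and the parallelogram identity; summing over $k$ yields $\sum_{n \in \bbz_0} |a_n|^2 = \tfrac12 \bigl( \|w_0\|_{H^1_0}^2 + \|w_1\|_{L^2}^2\bigr) = \tfrac12 \int_\Omega |\nabla w_0|^2 + |w_1|^2 \, dx$. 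Combining this identity with the Riesz-Fischer hypothesis \eqref{riesz} gives
\begin{equation*}
 \frac{c}{2} \int_\Omega |\nabla w_0(x)|^2 + |w_1(x)|^2 \, dx = c \sum_n |a_n|^2 \le \int_0^T \int_\bomega \Bigl| \frac{\pa w}{\pa\nu}(x,t)\Bigr|^2 \, dS(x)\, dt,
\end{equation*}
which is \eqref{waveob} with constant $c/2$.

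The only genuinely delicate point is justifying the termwise differentiation in the normal direction and the convergence of the boundary trace series: the map $(w_0,w_1) \mapsto \pa w/\pa\nu|_{\bomega \times [0,T]}$ is the well-known "hidden regularity" bounded trace operator into $\lbot$ (Lions, Lasiecka–Triggiani), so the series $\sum a_n \psi_n e^{i\lambda_n t}$ converges in $\lbot$ at least when finitely many modes are present, and the Riesz-Fischer inequality \eqref{riesz} is stated for finite sequences $\{a_n\}$; a density argument then extends the conclusion to all data. I would therefore carry out the computation above first for $(w_0,w_1)$ with finitely many nonzero Fourier modes — where every manipulation is a finite sum and completely rigorous — obtaining \eqref{waveob} on that dense subspace, and then pass to the limit using that both sides of \eqref{waveob} are continuous in the $H_0^1 \times L^2$ norm (the right side by hidden regularity). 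This reduces the whole proposition to the bookkeeping of the eigenfunction expansion together with one invocation of the standard trace estimate.
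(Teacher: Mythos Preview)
Your proof is correct and follows essentially the same approach as the paper: eigenfunction expansion of the data, explicit solution of the time ODE, and identification of $\partial w/\partial\nu$ with $\sum a_n\psi_n e^{i\lambda_n t}$ so that \eqref{waveob} becomes \eqref{riesz}. The only differences are cosmetic---you work directly with complex exponentials rather than passing through $\cos/\sin$ and Euler's formula, and you include a density/hidden-regularity justification that the paper leaves implicit.
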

\begin{proof}Let $(w_0,w_1) \in H^1_0(\Omega) \times L^2(\Omega)$.
We will represent the solution $w$ to (\ref{waveeq}) by separation of variables. In the space variable, we expand onto $\{\phi_n\}$. There exist $\{\xi_n\}, \{\eta_n\} \in \ell^2$ such that
	\[ w_0 = \sum_{n=1}^\infty \xi_n \phi_n \quad \quad w_1 = \sum_{n=1}^\infty \eta_n \phi_n \]
Since $w_0 \in H^1_0(\Omega)$, by the orthonormality of $\{\phi_n\}$,
	\[ \int_\Omega |\nabla w_0(x)|^2 \, dx = -\int_\Omega w_0\Delta w_0 \, dx = \int_\Omega \left( \sum \xi_n \phi_n \right)\left( \sum \lambda_n^2\xi_n \phi_n \right) = \sum |\lambda_n \xi_n|^2 \]
therefore $\{ \lambda_n \xi_n \}\in \ell^2$. Set $\tilde \xi_n = \lambda_n \xi_n$. Then,
	\[ w_0 = \sum \dfrac{\tilde \xi_n}{\lambda_n} \phi_n \]
Additionally, we consider the following ordinary differential equation to account for the time variable.
	\begin{equation} \left\{ \begin{array}{lrl}
		z_{jn}''(t) + \lambda_n^2 z_{jn}(t) = 0 & t \in [0,T]&j=1,2\\
		z_{1n}(T) = 1 \quad z_{1n}'(T) = 0 &t=T &\\
		z_{2n}(T) = 0 \quad z_{2n}'(T) = -\lambda_n &t=T&
	\end{array} \right. \label{ode}
	\end{equation}
Solutions to (\ref{ode}) are of the form $z_{1n}(t) = \cos(\lambda_n(T-t))$ and $z_{2n}(t) = \sin(\lambda_n(T-t))$. Thus, we can represent $w$ solving (\ref{waveeq}) as
	\begin{equation}\label{dualrep} w(x,t) = \sum \left[ \dfrac{\tilde\xi_n}{\lambda_n}\cos(\lambda_n(T-t)) - \dfrac{\eta_n}{\lambda_n}\sin(\lambda_n(T-t)) \right]\phi_n(x) \end{equation}
Then the observability inequality (\ref{waveob}) takes the following form:
	\begin{align} \notag
	c \sum &|\tilde \xi_n|^2 + |\eta_n|^2 \\
	&\le \int_0^T \int_\bomega \left|\sum  \left[ \tilde \xi_n\cos(\lambda_n(T-t)) - \eta_n\sin(\lambda_n(T-t)) \right]\dfrac{1}{\lambda_n}\dfrac{\partial \phi_n}{\partial \nu}(x) \right|^2 \, dS(x)\, dt \label{equiv}
	\end{align}
Using the Euler formula and setting $a_n = \tilde \xi_{|n|} + i \sgn(n)\eta_{|n|}$ for $n \in \bbz_0$, (\ref{waveob}) and (\ref{equiv}) are equivalent to
	\[ c \sum |a_n|^2 \le \int_0^T \int_\bomega \left| \sum a_n \psi_n e^{i \lambda_n t} \right|^2 \, dS \, dt \]
\end{proof}
\iffalse
Equivalence between observability (\ref{waveob}) and the Riesz-Fischer property (\ref{riesz}) actually holds when restricted to sequences $\{a_n\}$ with the property $a_n = \bar{a}_{-n}$. In what follows, we show a slighly stronger result by considering all $\ell^2$ sequences of coefficients.
\fi

\section{Main Result}\label{main}

\begin{theorem}\label{waveriesz}
Let $R>0$ such that $\Omega \subseteq B(x_0,R)$ for some $x_0 \in \bbr^d$. Then, for $T > 2R$, there exists $c >0$ such that
	\begin{equation} c \sum |a_n|^2 \le \int_0^T \int_\bomega \left| \sum a_n \psi_n(x) e^{i \lambda_n t} \right|^2 \, dS(x) \, dt
\end{equation}
for all $\{a_n\} \in \ell^2$.
Moreover, $c = \dfrac{2(T-2R)}{C_\Omega}$ where $C_\Omega$ is a positive constant dependent only on $\Omega$.
\end{theorem}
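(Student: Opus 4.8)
The core idea is to exploit the two complementary independence mechanisms advertised in the introduction, separated by a spectral gap parameter. The plan is to fix a gap threshold $\gamma > 0$ and decompose the index set $\bbz_0$ into clusters: group together those $n$ whose frequencies $\lambda_n$ lie within a window of length $\gamma$, so that between consecutive clusters there is always a gap of at least $\gamma$ in the $\lambda_n$'s. Write the sum $F(x,t) = \sum_n a_n \psi_n(x) e^{i\lambda_n t}$ as $\sum_k G_k(x,t)$ where $G_k$ collects the terms in the $k$-th cluster. The first step is a \emph{time-side} estimate: because distinct clusters are $\gamma$-separated in frequency, on a time interval of length $T - 2R$ the functions $e^{i\lambda_n t}$ behave like a Riesz-Fischer family \emph{across clusters} — this is the classical Ingham-type / complex-exponential input (Beurling's theorem or an Ingham inequality), which gives
\[
\int_0^T \left\| F(\cdot,t) \right\|_{L^2(\bomega)}^2 \, dt \ \gtrsim \ (T-2R) \sum_k \int \left\| G_k \right\|^2 \text{ (in an appropriate averaged sense)}.
\]
The factor $T - 2R$ is where the hypothesis $T > 2R$ enters, and tracking it carefully produces the explicit constant $2(T-2R)$.

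The second step is the \emph{space-side} estimate \emph{within} a single cluster. Here all frequencies $\lambda_n$ are within $\gamma$ of each other, so $e^{i\lambda_n t} \approx e^{i\bar\lambda_k t}$ up to a controlled error, and the cluster contribution is essentially $\left( \sum_{n \in \text{cluster}_k} a_n \psi_n(x) \right) e^{i\bar\lambda_k t}$. Thus one needs a lower bound
\[
\int_\bomega \left| \sum_{n \in \text{cluster}_k} a_n \psi_n(x) \right|^2 dS(x) \ \ge \ c_\Omega \sum_{n \in \text{cluster}_k} |a_n|^2,
\]
i.e. the normal-derivative traces $\psi_n = \lambda_n^{-1} \partial_\nu \phi_{|n|}$ form a Riesz-Fischer system in each small spectral window, with a constant $c_\Omega$ independent of the window. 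This is exactly the Rellich/Hassell–Tao input: the Rellich identity gives $\int_\bomega (x - x_0)\cdot\nu \, |\partial_\nu \phi_n|^2 \, dS = 2\lambda_n^2 \|\phi_n\|_{L^2(\Omega)}^2 + \dots$, which normalizes each $\psi_n$ from below, and for a finite cluster one upgrades this to a Riesz-Fischer bound using (quasi-)orthogonality of eigenfunctions with nearby but distinct eigenvalues, or a direct Rellich-type argument applied to a linear combination. The constant $C_\Omega$ in the theorem will come out of this step (roughly the reciprocal of the infimum of $(x-x_0)\cdot\nu$ over $\bomega$, or a related geometric quantity, together with whatever loss the cluster argument incurs).

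I would then combine the two steps: the time-side estimate reduces the global bound to a sum over clusters, and the space-side estimate handles each cluster, yielding $c \sum |a_n|^2 \le \int_0^T \int_\bomega |F|^2$. The \textbf{main obstacle} I anticipate is the interface between the two steps — making the cluster decomposition genuinely compatible. Specifically: (i) choosing $\gamma$ so that the time-side Ingham constant stays quantitatively $\gtrsim T - 2R$ while the within-cluster error from replacing $e^{i\lambda_n t}$ by $e^{i\bar\lambda_k t}$ remains a small relative perturbation, and (ii) ensuring the within-cluster Riesz-Fischer constant $c_\Omega$ does not degrade as the cluster size grows (the number of eigenvalues in a window of fixed length $\gamma$ is bounded via Weyl's law, but one must check the Rellich-based constant is uniform). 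Handling these two competing requirements simultaneously — essentially a bootstrap where $\gamma$ is chosen once and for all depending only on $\Omega$ — is the delicate heart of the argument; everything else is assembly via triangle inequality and the already-proved Proposition \ref{prop1}.
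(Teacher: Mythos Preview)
Your plan has a genuine gap in the within-cluster step, and the overall architecture is quite different from the paper's argument. The specific error: you assert that ``the number of eigenvalues in a window of fixed length $\gamma$ is bounded via Weyl's law.'' This is false in dimension $d\ge 2$. Weyl's law gives $N(\lambda)\sim c_d|\Omega|\lambda^d$, so the number of $\lambda_n$ in $[\lambda,\lambda+\gamma]$ grows like $\gamma\lambda^{d-1}$ and clusters are arbitrarily large. A within-cluster Riesz--Fischer lower bound for $\{\psi_n\}$ therefore cannot be obtained by a finiteness/counting argument; it requires an operator-norm estimate of Barnett--Hassell type, which you have not supplied and which does not obviously mesh with an Ingham step afterward. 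A second, related problem: the Ingham constant for exponentials with frequency gap $\gamma$ is of the form $T-2\pi/\gamma$, not $T-2R$. Nothing in your time-side step explains how the purely spatial radius $R$ enters, so the explicit constant $2(T-2R)/C_\Omega$ will not emerge from your scheme as written.

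The paper bypasses clustering altogether. It sets $C_\Omega=\max_{\bomega}(x-x_0)\cdot\nu$, inserts the weight $(x-x_0)\cdot\nu\le C_\Omega$ into the boundary integral, and expands the square directly as $\sum_{j,k}a_j\bar a_k\int_0^T e^{i(\lambda_j-\lambda_k)t}\,dt\int_{\bomega}(m\cdot\nu)\psi_j\overline{\psi_k}\,dS$. The diagonal $j=k$ contributes exactly $2T\sum|a_j|^2$ by the Rellich identity (Lemma~\ref{quasio}). For the off-diagonal terms, Lemma~\ref{quasio} converts the boundary integral to $\frac{\lambda_j^2-\lambda_k^2}{\lambda_j\lambda_k}\int_\Omega A\phi_{|j|}\overline{\phi_{|k|}}$ with $A=(x-x_0)\cdot\nabla$; the factor $\lambda_j-\lambda_k$ cancels against the time integral $\int_0^T e^{i(\lambda_j-\lambda_k)t}\,dt=(e^{i(\lambda_j-\lambda_k)T}-1)/i(\lambda_j-\lambda_k)$, and the remaining double sum is bounded \emph{globally} by $4R\sum|a_j|^2$ via the elementary estimate $\|Au\|_{L^2(\Omega)}\le R\|\nabla u\|_{L^2(\Omega)}$ (Lemma~\ref{abdd}). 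Subtraction gives $2(T-2R)\sum|a_j|^2$. Thus $R$ enters through the operator bound on $A$, not through any time-frequency separation, and no spectral-gap parameter $\gamma$ is ever chosen.
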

We first state two preliminary lemmas concerning the functions $\{\psi_n\}$. Define the following operator $A:H^1_0(\Omega) \to L^2(\Omega)$ which connects the boundary terms $\psi_n$ with the interior eigenfunctions $\phi_n$.
%Let $A:H^1_0(\Omega) \to L^2(\Omega)$ be defined by
	\begin{equation} (Au)(x) = m(x) \cdot \nabla u(x) \mbox{ where } m(x) = x-x_0 \label{am} \end{equation}
for $u \in H^{1}_0(\Omega)$ and $x \in \Omega$.

\begin{lemma}\label{quasio} Let $A$ and $m$ be defined by (\ref{am}). Then, for all $j,k \in \bbz_0$,
\begin{equation} 
%\lip m\cdot \nu \psi_j,\psi_k\rip := 
\int_\bomega (m \cdot \nu) \psi_j \overline{\psi_k} \, dS = \left\{ \begin{array}{cl} \displaystyle \dfrac{\lambda_j^2-\lambda_k^2}{\lambda_j\lambda_k}\int_{\Omega}A\phi_{|j|}\overline{\phi_{|k|}} \, dx &  \text{if } |j|\neq |k|; \\[5mm] 2 & \text{if }j=k; \\[2mm] -2 &\text{if } j=-k \end{array} \right. \end{equation}
\end{lemma}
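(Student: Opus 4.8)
The plan is to compute the boundary integral $\int_\bomega (m\cdot\nu)\psi_j\overline{\psi_k}\,dS$ by converting it into an interior integral via integration by parts (a Rellich--Pohozaev type identity), which is exactly what the operator $A$ in (\ref{am}) is designed to facilitate. First I would recall that on $\bomega$, since $\phi_{|j|}=0$ there, the tangential gradient vanishes, so $\nabla\phi_{|j|}=\dfrac{\pa\phi_{|j|}}{\pa\nu}\,\nu$; hence $\psi_j=\dfrac1{\lambda_j}\dfrac{\pa\phi_{|j|}}{\pa\nu}$ satisfies $(m\cdot\nu)\psi_j\overline{\psi_k}=\dfrac1{\lambda_j\lambda_k}(m\cdot\nu)\dfrac{\pa\phi_{|j|}}{\pa\nu}\dfrac{\pa\phi_{|k|}}{\pa\nu}$, which on $\bomega$ equals $\dfrac1{\lambda_j\lambda_k}\big(m\cdot\nabla\phi_{|j|}\big)\dfrac{\pa\phi_{|k|}}{\pa\nu}$ (and symmetrically). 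This rewrites the left side as $\dfrac1{\lambda_j\lambda_k}\int_\bomega (A\phi_{|j|})\,\dfrac{\pa\phi_{|k|}}{\pa\nu}\,dS$.

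Next I would apply Green's identity to move this to the interior: $\int_\bomega (A\phi_{|j|})\dfrac{\pa\phi_{|k|}}{\pa\nu}\,dS = \int_\Omega \nabla(A\phi_{|j|})\cdot\nabla\phi_{|k|}\,dx + \int_\Omega (A\phi_{|j|})\Delta\phi_{|k|}\,dx$, using that $A\phi_{|j|}$ need not vanish on $\bomega$ but $\phi_{|k|}$ does when we integrate the other way. Using $-\Delta\phi_{|k|}=\lambda_k^2\phi_{|k|}$ (equivalently $\lambda_{|k|}^2$) this becomes $\int_\Omega \nabla(A\phi_{|j|})\cdot\nabla\phi_{|k|}\,dx - \lambda_k^2\int_\Omega (A\phi_{|j|})\phi_{|k|}\,dx$. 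The first term I would handle by integrating by parts once more and invoking the eigenvalue equation for $\phi_{|j|}$, together with the pointwise identity $\nabla(m\cdot\nabla\phi) = \nabla\phi + (m\cdot\nabla)\nabla\phi$ (so $\operatorname{div}(m\,|\nabla\phi|^2/2)$-type manipulations appear). The classical Rellich--Pohozaev computation then collapses all the $\nabla\phi_{|j|}\cdot\nabla\phi_{|k|}$ cross terms down to a multiple of $(\lambda_j^2-\lambda_k^2)\int_\Omega (A\phi_{|j|})\overline{\phi_{|k|}}\,dx$ plus, when $|j|=|k|$, a boundary term $\int_\bomega (m\cdot\nu)|\dfrac{\pa\phi_{|j|}}{\pa\nu}|^2\,dS$ that must be reconciled; symmetrizing the computation in $j$ and $k$ (or equivalently using the self-adjointness of $-\Delta$) eliminates the ambiguity and isolates the stated coefficient $\dfrac{\lambda_j^2-\lambda_k^2}{\lambda_j\lambda_k}$.

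For the diagonal cases I would treat $j=k$ and $j=-k$ separately since the general formula is $0/0$ there. For $j=k$ (so $\lambda_j=\lambda_k=\lambda_{|j|}$ with matching signs), the Rellich identity for a single real eigenfunction $\phi=\phi_{|j|}$ reads $\int_\bomega (m\cdot\nu)\big|\dfrac{\pa\phi}{\pa\nu}\big|^2\,dS = 2\int_\Omega|\nabla\phi|^2\,dx - (d-2)\int_\Omega|\nabla\phi|^2\,dx + \dots$; more precisely the standard identity gives $\int_\bomega(m\cdot\nu)\big|\dfrac{\pa\phi}{\pa\nu}\big|^2 dS = 2\lambda_{|j|}^2\int_\Omega|\phi|^2\,dx$ after using $\int_\Omega|\nabla\phi|^2 = \lambda_{|j|}^2\int_\Omega|\phi|^2 = \lambda_{|j|}^2$, so dividing by $\lambda_j\lambda_k=\lambda_{|j|}^2$ yields exactly $2$. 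The case $j=-k$ then differs only by the sign $\lambda_j\lambda_k = -\lambda_{|j|}^2$, giving $-2$.

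The main obstacle I anticipate is the careful bookkeeping in the Rellich--Pohozaev integration by parts: keeping track of which boundary terms survive (only those involving $m\cdot\nu$ times normal derivatives, since all functions or their tangential parts vanish on $\bomega$), correctly handling the non-self-adjoint-looking asymmetry between $\phi_{|j|}$ and $\phi_{|k|}$ when $|j|\neq|k|$, and verifying that the dimension-dependent terms (the $(d-2)$ and $d$ factors typical of Pohozaev identities) cancel out of the cross terms so that the final coefficient is the clean $\dfrac{\lambda_j^2-\lambda_k^2}{\lambda_j\lambda_k}$ with no dependence on $d$. I would double-check the constant by testing on an explicit example (e.g.\ an interval in $d=1$) before finalizing.
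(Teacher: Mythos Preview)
Your approach is correct and shares the paper's starting point: rewrite the boundary integrand as $\frac{1}{\lambda_j\lambda_k}(A\phi_{|j|})\frac{\pa\phi_{|k|}}{\pa\nu}$ on $\bomega$ (using that $\nabla\phi_{|j|}$ is purely normal there), then push the integral into the interior via Green's theorem. Where you diverge is in the interior computation. You propose Green's first identity followed by a full Rellich--Pohozaev expansion of $\nabla(A\phi_{|j|})\cdot\nabla\phi_{|k|}$, and you correctly anticipate that tracking the dimension-dependent constants is the main nuisance. The paper sidesteps all of that: it applies Green's \emph{second} identity (the extra boundary term $\int_\bomega \phi_{|k|}\,\pa_\nu(A\phi_{|j|})$ vanishes since $\phi_{|k|}=0$ on $\bomega$) to get $\frac{1}{\lambda_j\lambda_k}\int_\Omega\big(A\phi_{|j|}\,\Delta\overline{\phi_{|k|}}-\Delta(A\phi_{|j|})\,\overline{\phi_{|k|}}\big)\,dx$, and then invokes the single commutator identity $\Delta A - A\Delta = 2\Delta$. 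Substituting the eigenvalue equations and orthonormality of $\{\phi_n\}$ then gives all three cases in one line, with no $d$-dependent terms ever appearing. Your route would reach the same destination, but the commutator shortcut is cleaner and worth adopting.
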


\begin{proof}
We use the fact that
\begin{equation*}
A\phi_j(x) = (m\cdot\nu)\frac{\pa \phi_j}{\pa\nu}(x), \ \forall x\in\pa\Omega, \ j \in \bbn 
\end{equation*}
as in \cite{rellich46} since $\phi_j = 0$ on $\bomega$. Applying Green's Theorem and the fact that $\Delta A - A\Delta = 2\Delta$,
\begin{align*}
\int_{\partial\Omega}(m\cdot\nu)\psi_j(x)&\overline{\psi_k(x)} \ dS 
=\frac{1}{\lambda_j\lambda_k}\int_{\partial\Omega}A\phi_{|j|}\frac{\pa\overline{\phi_{|k|}}}{\pa\nu} \ dS \\[2mm]
& = \frac{1}{\lambda_j\lambda_k}\int_{\Omega} A\phi_{|j|}\Delta\overline{\phi_{|k|}} - \Delta(A\phi_{|j|})\overline{\phi_{|k|}} \ dx\\[2mm]
& = \left\{ \begin{array}{cl}\displaystyle \frac{\lambda_j^2-\lambda_k^2}{\lambda_j\lambda_k}\int_{\Omega}A\phi_{|j|}\overline{\phi_{|k|}} \ dx &  \text{if } |j|\neq |k|; \\[3mm] \displaystyle \frac{1}{\lambda_j\lambda_k}\int_{\Omega}2\lambda_j^2|\phi_{|j|}|^2 \ dx = \pm2 & \text{if } | j|=|k|. \end{array} \right.
\end{align*}
\end{proof}

\begin{lemma}\label{abdd}
The sequence $\{\lambda_j^{-1} A\phi_{|j|}\}_{j \in\zo}$ is quasi-orthogonal in $L^2(\Omega)$. More precisely, for all $u \in \ell^2(\bbz_0)$,	\begin{equation}\label{abddeq} \int_\Omega \left| \sum_j u_j \dfrac{A \phi_{|j|}}{\lambda_j} \right|^2\le R^2  \sum_j \left(|u_j|^2 - u_j\bar u_{-j}\right) \end{equation}
Secondly,
	\begin{equation}\label{aadjoint} \int_\Omega A\phi_{|j|} \phi_{|k|} = -\int_\Omega \phi_{|j|} A \phi_{|k|} \end{equation}
for $|j| \ne |k|$.
\end{lemma}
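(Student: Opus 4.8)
The plan is to prove the two statements separately, with the second one being an ingredient in the first. For \eqref{aadjoint}, I would integrate by parts: writing $A\phi = m\cdot\nabla\phi$, an integration by parts moves the derivative off $\phi_{|j|}$ and onto $m\,\phi_{|k|}$, producing $\int_\Omega A\phi_{|j|}\,\phi_{|k|} = -\int_\Omega \phi_{|j|}\, A\phi_{|k|} - \int_\Omega (\nabla\cdot m)\,\phi_{|j|}\phi_{|k|} + \text{boundary terms}$. The boundary terms vanish because $\phi_{|j|}=\phi_{|k|}=0$ on $\bomega$, and $\nabla\cdot m = d$ is constant, so the remaining term is $-d\int_\Omega \phi_{|j|}\phi_{|k|} = 0$ by orthonormality of the $\phi_n$ once $|j|\neq|k|$. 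That gives \eqref{aadjoint}.

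For the quasi-orthogonality estimate \eqref{abddeq}, I would expand the square and split the double sum over $j,k$ into the diagonal blocks $|j|=|k|$ and the off-diagonal part $|j|\neq|k|$. On the off-diagonal part I would like to show the total contribution is controlled; the key algebraic identity to exploit is the one behind Lemma \ref{quasio}, namely that $\int_\Omega A\phi_{|j|}\overline{\phi_{|k|}}\,dx$ is, up to the factor $\frac{\lambda_j\lambda_k}{\lambda_j^2-\lambda_k^2}$, equal to the boundary integral $\int_\bomega (m\cdot\nu)\psi_j\overline{\psi_k}\,dS$ — but more useful here is a direct Rellich-type identity. The cleanest route is to use the Rellich–Pohozaev identity for the operator $A$: for any $u\in H^1_0(\Omega)$ one has an identity expressing $\int_\Omega |\nabla u|^2 - \text{something}$ in terms of $\int_\bomega (m\cdot\nu)|\partial u/\partial\nu|^2$ and $\int_\Omega (Au)\,\Delta u$. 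Applying this to $u=\sum_j u_j \phi_{|j|}/\lambda_j$ — which satisfies $-\Delta u = \sum_j u_j\lambda_j\phi_{|j|}$ — and using $m\cdot\nu\leq |m|\leq R$ on $\bomega$ (since $\Omega\subseteq B(x_0,R)$), together with the observation that $\|\nabla u\|^2 = \sum_{|j|=|k|}$ terms, should collapse the estimate to the right-hand side of \eqref{abddeq}. The appearance of $|u_j|^2 - u_j\bar u_{-j}$ rather than just $|u_j|^2$ reflects the $\pm 2$ and cancellation structure seen in Lemma \ref{quasio}: the "$\cos$-type" combinations survive while the cross terms $u_j\bar u_{-j}$ subtract off, exactly as in that lemma's diagonal.

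Concretely, the main identity I would invoke is: for $u\in H^1_0(\Omega)$,
\begin{equation*}
\int_\bomega (m\cdot\nu)\left|\frac{\pa u}{\pa\nu}\right|^2 dS = 2\,\re\!\int_\Omega (Au)\,\overline{\Delta u}\,dx + (2-d)\int_\Omega |\nabla u|^2\,dx + \text{l.o.t.},
\end{equation*}
rearranged to bound $\int_\Omega |Au|^2$ after applying Cauchy–Schwarz to the middle term — or, more directly, bounding $\int_\Omega |Au|^2 \leq R^2 \int_\bomega \ldots$ is false in general, so the correct move is to relate $\int_\Omega |Au|^2$ to $\int_\Omega |m|^2|\nabla u|^2 \leq R^2\int_\Omega|\nabla u|^2$ by the pointwise inequality $|Au(x)| = |m(x)\cdot\nabla u(x)| \leq |m(x)|\,|\nabla u(x)| \leq R\,|\nabla u(x)|$, and then identify $\int_\Omega |\nabla u|^2 = \sum_j |u_j|^2$ — no, this overshoots by the cross terms, so the honest computation is $\int_\Omega|\nabla u|^2 = \sum_{j,k}\frac{u_j\bar u_k}{\lambda_j\lambda_k}\int_\Omega\nabla\phi_{|j|}\cdot\nabla\phi_{|k|} = \sum_{|j|=|k|}\frac{u_j\bar u_k}{\lambda_j\lambda_k}\lambda_{|j|}^2 = \sum_j(|u_j|^2 - \re(u_j\bar u_{-j}))$ since $\lambda_{-j}=-\lambda_j$. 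Thus the pointwise bound $|Au|\leq R|\nabla u|$ integrated gives exactly $\int_\Omega|\sum_j u_j A\phi_{|j|}/\lambda_j|^2 \leq R^2\sum_j(|u_j|^2 - u_j\bar u_{-j})$. The main obstacle is bookkeeping the sign conventions $\lambda_{-n} = -\lambda_n$, $\phi_{|n|}$ consistently so that the cross terms assemble into precisely the stated quantity; the analytic content (pointwise Cauchy–Schwarz plus $|m|\leq R$) is elementary.
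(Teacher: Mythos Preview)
Your proposal is correct and, after some exploratory detours, lands on essentially the same argument as the paper: for \eqref{aadjoint} you integrate by parts exactly as the authors do (the divergence of $m$ is $d$, the boundary term vanishes since $\phi_{|j|}=0$ on $\bomega$, and orthonormality kills the remaining term), and for \eqref{abddeq} you use the pointwise bound $|Au|=|m\cdot\nabla u|\le R|\nabla u|$ together with the computation $\int_\Omega|\nabla u|^2=\sum_j(|u_j|^2-u_j\bar u_{-j})$ coming from the orthogonality of $\{\lambda_j^{-1}\nabla\phi_{|j|}\}$. One small remark: your initial claim that \eqref{aadjoint} is an ingredient in proving \eqref{abddeq} turns out to be unnecessary in the route you ultimately take (and the paper likewise proves the two parts independently), so you can drop that dependence and the Rellich--Pohozaev digression entirely.
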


\begin{proof}
Notice that the system $\{\lambda_j^{-1}\nabla \phi_{|j|} \}_{j \in \bbz_0}$ has some sense of orthogonality. Indeed, for each $j,k \in \bbz_0$, 
	\[ \int_\Omega \dfrac{\nabla \phi_{|j|} \cdot \nabla \overline \phi_{|k|}}{\lambda_j\lambda_k} = -\int_\Omega \dfrac{\phi_{|j|} \Delta \overline{\phi_{|k|}}}{\lambda_j\lambda_k} = \left\{ \begin{array}{rc} 0 & |j| \ne |k| \\ 1 & j=k \\ -1 & j=-k\end{array} \right. \]
Then, using the definition of $A$ in (\ref{am}) and the Cauchy-Schwarz Inequality, we obtain for $\{u_j\} \in \ell^2(\bbz_0)$
\[ \int_\Omega \left| \sum_j u_j \dfrac{A \phi_j}{\lambda_j}\right|^2 
%= \int_\Omega\left|m\cdot \sum_j u_j \dfrac{\nabla \phi_j}{\lambda_j} \right|^2 
\le R^2\int_\Omega\left|\sum_j u_j \dfrac{\nabla \phi_j}{\lambda_j} \right|^2 = R^2 \left( \sum_j |u_j|^2 - \sum_j u_j\bar u_{-j} \right) \]

Now we proceed to the second statement in the lemma. Recalling $m$ from (\ref{am}) and using $\pa_i$ to denote $\frac{\pa}{\pa x_i}$,
	\[ m_i \pa_i \phi_j \overline \phi_k = \pa_i (m_i \phi_j \overline \phi_k) - (\pa_i m_i) \phi_j \phi_k - m_i \phi_j \pa_i \overline{\phi_k} \]
Summing over $i=1,\ldots,d$ and integrating over $\Omega$ yields
	\begin{equation}\label{asym} \int_\Omega A \phi_j \overline{\phi_k} = \int_\Omega \nabla \cdot (m \phi_j\overline{\phi_k}) - d \int_\Omega \phi_j \overline{\phi_k} - \int_\Omega \phi_j A\overline{\phi_k} \end{equation}
which gives the desired identity since $\phi_j=0$ on $\bomega$ and $\{\phi_j\}$ are orthonormal.
\end{proof}

Now we complete the proof of Theorem 1. To be concise, all sums are assumed to be taken over $\bbz_0$ unless otherwise stated. For $C_{\Omega} :=\max_{x\in\pa\Omega}[m(x)\cdot\nu(x)]\le R$, we have the following estimate using Lemma \ref{quasio}.
\begin{align}
	C_{\Omega}\int_{\pa\Omega}\int_0^T&\left|\sum_{j }a_je^{i\lambda_jt}\psi_j(x)\right|^2 \ dt\, dS
	\ge \sum_{j}\sum_{k}a_j\bar a_k\int_{0}^{T}e^{i(\lambda_j-\lambda_k)t} \ dt \int_{\pa\Omega}(m\cdot\nu)\psi_j\overline{\psi_k} \ dS \nonumber\\[2mm]
	= 2T\sum_{j}&|a_j|^2 -\sum_j a_j\bar{a}_{-j} \dfrac{e^{i2\lambda_jT}-1}{i\lambda_j} \notag\\
		&+\sum_{j}\sum_{k\neq \pm j}a_j\bar{a}_k\left(\frac{1}{i\lambda_j}+\frac{1}{i\lambda_k}\right)\left(e^{i(\lambda_j-\lambda_k)T}-1\right)\int_{\Omega}A\phi_j\overline{\phi_k} \ dx \label{orig}
\end{align}
Notice when $k= j$, the terms in the double summation actually have zero value. Thus we may include them in the summation. Moreover, applying the second statement in Lemma \ref{abdd},
	\[ \sum_{j}\sum_{k\neq \pm j}a_j\bar{a}_k\left(\frac{1}{i\lambda_j}+\frac{1}{i\lambda_k}\right)\left(e^{i(\lambda_j-\lambda_k)T}-1\right)\int_{\Omega}A\phi_j\overline{\phi_k} \ dx  \] \[= 2 \re\left(\sum_{j}\sum_{k\neq - j}a_j\bar{a}_k\left(e^{i(\lambda_j-\lambda_k)T}-1\right)\int_{\Omega}\dfrac{A\phi_j}{i\lambda_j}\overline{\phi_k} \ dx\right)\] 
We will now include the terms when $k=-j$. Notice that by (\ref{asym}), $ 2\re\left(\int_\Omega A\phi_{|j|}\bar \phi_{|-j|} \right) = -d$.
Thus we can rewrite the second two terms in the original inequality (\ref{orig}) as
	\begin{align*} 
		&(d-1)\re\left(\sum_j a_j\bar a_{-j} \dfrac{e^{i2\lambda_jT}-1}{i\lambda_j}\right) \\
		&\hspace{10ex}+2 \re\left(\sum_{j}\sum_{k}a_j\bar{a}_k\left(e^{i(\lambda_j-\lambda_k)T}-1\right)\int_{\Omega}\dfrac{A\phi_j}{i\lambda_j}\overline{\phi_k} \, dx\right) 
	\end{align*}
Additionally, we have the following identity for the single sum:
	\begin{align*} &\sum_j a_j\bar a_{-j}\dfrac{e^{i2\lambda_jT}-1}{i\lambda_j} \\
	&\hspace{5ex}= \int_\Omega \left( \sum_j a_j e^{i\lambda_jT} \dfrac{\phi_{|j|}}{i\lambda_j} \right)\overline{\left( \sum_k a_k e^{i\lambda_kT} \phi_{|k|} \right)}- \left( \sum_j a_j \dfrac{\phi_{|j|}}{i\lambda_j} \right)\overline{\left( \sum_k a_k \phi_{|k|} \right)}dx\end{align*}
Then we split the double sum into two terms (one with $e^{i(\lambda_j-\lambda_k)T}$ and one with $-1$) and estimate each with the corresponding portion in the above identity.
	\begin{align}\notag &\left| \int_\Omega (d-1)\left( \sum_j a_j e^{i\lambda_jT} \dfrac{\phi_{|j|}}{i\lambda_j} \right)\overline{\left( \sum_k a_k e^{i\lambda_kT} \phi_{|k|} \right)} +2\sum_{j}\sum_{k}a_j\bar a_ke^{i(\lambda_j-\lambda_k)T}\int_{\Omega}\dfrac{A\phi_{|j|}}{i\lambda_j}\overline{\phi_{|k|}} \right|\\
	& \hspace{7ex}=\left| \int_\Omega \left((d-1)\sum_j a_j e^{i\lambda_jT} \dfrac{\phi_{|j|}}{\lambda_j} + 2\sum_j a_j e^{i\lambda_jT}\dfrac{A\phi_{|j|}}{\lambda_j}\right)\overline{\left( \sum_k a_k e^{i\lambda_kT} \phi_{|k|} \right)} \right|\notag \\
	&\hspace{7ex} \le \dfrac{1}{4R} \int_\Omega \left|(d-1)\sum_j a_j e^{i\lambda_jT} \dfrac{\phi_{|j|}}{\lambda_j} + 2\sum_j a_j e^{i\lambda_jT}\dfrac{A\phi_{|j|}}{\lambda_j}\right|^2 + R \left| \sum_k a_k e^{i\lambda_kT} \phi_{|k|} \right|^2 \label{id2}\end{align}
Note that by (\ref{asym}), for any $u \in H^1_0(\Omega)$,
	\[ \|(d-1)u+2Au\|^2 = (d-1)^2\|u\|^2 + 4(d-1)\re(u,Au) + 4\|Au\|^2  \] \[= (-1-d)(d-1)\|u\|^2 + 4\|Au\|^2 \le 4\|Au\|^2 \] 
where $\|\cdot\|$ and $(\cdot,\cdot)$ denote the $L^2(\Omega)$ norm and inner product. Apply this to (\ref{id2}) with $u=\sum a_je^{i\lambda_jT}\phi_{|j|}\lambda_j^{-1}$. Then, applying (\ref{abddeq}) from Lemma \ref{abdd}, we have
	\begin{equation} \label{id3} \dfrac{1}{R} \int_\Omega \left| \sum_j a_j e^{i\lambda_jT}\dfrac{A\phi_{|j|}}{\lambda_j}\right|^2 + R \int_\Omega \left| \sum_k a_k e^{i\lambda_kT} \phi_{|k|} \right|^2 \end{equation}
	\[ \le R \sum_j \left(|a_j|^2 - a_j\bar a_{-j} e^{i2\lambda_jT} \right) +R \sum_k \left(|a_k|^2 + a_k\bar a_{-k} e^{i2\lambda_kT} \right) = 2R \sum_j |a_j|^2 \]

The other term (with $e^{i(\lambda_j-\lambda_k)T}$ replaced by $-1$) is estimated in a manner similar to (\ref{id2}) and (\ref{id3}). Substituting (\ref{id3}) and the corresponding estimate for $-1$ into the original inequality gives the desired result:

\begin{align*}
C_{\Omega}\int_0^T&\int_{\pa\Omega}\left|\sum_{j}a_je^{i\lambda_jt}\psi_j(x)\right|^2 \, dS \, dt \\[2mm]
&\geq 2T\sum_{j}|a_j|^2 + \sum_j a_j\bar{a}_{-j} \dfrac{e^{i2\lambda_jT}-1}{i\lambda_j} \\
&\hspace{5ex}+ \sum_{j}\sum_{k\neq \pm j}a_j\bar a_k\left(\frac{1}{i\lambda_j}+\frac{1}{i\lambda_k}\right)\left(e^{i(\lambda_j-\lambda_k)T}-1\right)\int_{\Omega}A\phi_{|j|}\overline{\phi_{|k|}}  \\[2mm]
&\geq 2T\sum_{j}|a_j|^2 - 4R\sum_{j}|a_j|^2 =2\left(T - 2R\right)\sum_j|a_j|^2.
\end{align*}
\qed

\section{Application to the Visco-Elastic Equation}\label{visco}
Let $\Omega \subseteq \bbr^d$ be an open, bounded domain with smooth boundary. Consider the following visco-elastic wave equation:
\iffalse
\begin{equation}\left\{ 
	\begin{array}{lr}
		y_{tt}(x,t) -\Delta y(x,t)= \displaystyle\int_0^t M(t-s)\Delta y(x,s) \, ds \hspace{5ex}&\text{in } \Omega \times [0,T] \\[2.5mm]
		y(x,0) = y_0(x) \quad y_t(x,0) = y_1(x) \hspace{5ex} &\text{in } \Omega  \\[1.5mm]
		y(x,t) = g(x,t) &\text{on } \bomega \times [0,T]
	\end{array} \right. \label{viscocontrol}
\end{equation}
for given $M \in H^2(0,T)$, $y_0 \in L^2(\Omega)$, $y_1 \in H^{-1}(\Omega)$, and $g \in L^2(\Gamma)$. By a similar argument to the one presented section \ref{harm} of this note, the exact controllability of (\ref{viscocontrol}) will be shown if it is the case that $\{z_n(t)\psi_n(x)\} \subseteq \lbot$ is a Riesz sequence where $z_n$ satisfies the following time ODE (compare with (\ref{ode})).
		\begin{equation} \left\{ \begin{array}{lr}
		z_n''(t) + \lambda_n^2 z_n(t) = -\lambda_n^2 \displaystyle\int_t^T M(s-t) z_n(s) \, ds & t \in [0,T]\\[3mm]
		z_{n}(T) = 1 \quad \quad z_{1n}'(T) = i\lambda_n \\[2mm]
		\end{array} \right. \label{viscoode}
		\end{equation}
\fi
\begin{equation}\left\{ 
	\begin{array}{lr}
		y_{tt}(x,t) -\Delta y(x,t)= \displaystyle\int_t^T M(s-t)\Delta y(x,s) \, ds \hspace{5ex}&\text{in } \Omega \times [0,T] \\[2.5mm]
		y(x,T) = y_0(x) \quad y_t(x,T) = y_1(x) \hspace{5ex} &\text{in } \Omega  \\[1.5mm]
		y(x,t) = 0 &\text{on } \bomega \times [0,T]
	\end{array} \right. \label{viscodual}
\end{equation}
for given $M \in H^2(0,T)$, $y_0 \in H^1_0(\Omega)$ and $y_1 \in L^2(\Omega)$. By a similar argument to the one presented section \ref{harm}, the boundary observability inequality (analogous to (\ref{waveob})) for (\ref{viscodual}) will be acheived if it can be established that $\{z_n(t)\psi_n(x)\} \subseteq \lbot$ is a Riesz-Fisher sequence where $\psi_n$ is as defined in (\ref{psidef}) and $z_n$ satisfies the following time ODE (compare with (\ref{ode})) for each $n \in \zo$.
		\begin{equation} \left\{ \begin{array}{lr}
		z_n''(t) + \lambda_n^2 z_n(t) = -\lambda_n^2 \displaystyle\int_t^T M(s-t) z_n(s) \, ds & t \in [0,T]\\[3mm]
		z_{n}(T) = 1 \quad \quad z_{n}'(T) = i\lambda_n \\[2mm]
		\end{array} \right. \label{viscoode}
		\end{equation}
This formulation is thoroughly carried out in \cite{loreti12} and \cite{pandolfi15} where the following Riesz sequence property (\ref{viscors}) is obtained for space dimension $d=1$ and $d \le 3$ respectively. Here we extend this to the general case $d \ge 1$.

\begin{theorem}\label{viscoriesz}
Let $R>0$ such that $\Omega \subseteq B(x_0,R)$ for some $x_0 \in \bbr^d$. If $T>2R$, then for $\{z_n\}$ solving (\ref{viscoode}) and $\{\psi_n\}$ as defined in (\ref{psidef}), $\{z_n\psi_n\}$ is a Riesz sequence in $\lbot$. In other words, there exists $c,C>0$ such that
	\begin{equation}\label{viscors} c\sum |a_n|^2 \le \int_0^T \int_\bomega \left| \sum a_n z_n(t) \psi_n(x) \right|^2 \, dS(x) \, dt \le C \sum |a_n|^2 \end{equation}
for all finite sequences $\{a_n\}$.
\end{theorem}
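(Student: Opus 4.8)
The plan is to perturb off of Theorem~\ref{waveriesz}. The key observation is that the solution $z_n$ to the viscoelastic time ODE (\ref{viscoode}) is a perturbation of the pure exponential $e^{i\lambda_n(t-T)}$ (times a unit-modulus constant), and the perturbation is controlled \emph{uniformly in $n$}. More precisely, I would first write $z_n(t) = e^{i\lambda_n(t-T)} + r_n(t)$ and use the variation-of-parameters (Duhamel) representation for the forced oscillator:
\begin{equation*}
z_n(t) = e^{i\lambda_n(t-T)} - \lambda_n \int_t^T \sin(\lambda_n(t-\tau)) \int_\tau^T M(s-\tau) z_n(s)\, ds\, d\tau.
\end{equation*}
Differentiating under the integral sign moves one power of $\lambda_n$ onto the kernel, and after an integration by parts (using $M \in H^2(0,T)$, so $M, M'$ are bounded on $[0,T]$) the factor $\lambda_n$ is absorbed, yielding a Volterra-type estimate $\sup_{t\in[0,T]} |r_n(t)| + \|r_n'\|_{\infty} \le C_M(T)$, uniform in $n$, and in fact one can iterate Gr\"onwall to get that $r_n$ and $r_n'$ are small in $\sup$-norm when $T$ is small; but for fixed $T$ one only needs a uniform bound. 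The point is that $\{r_n(t)\psi_n(x)\}$ should be treated not termwise but as a \emph{bounded perturbation} of the Riesz--Fischer system $\{e^{i\lambda_n t}\psi_n(x)\}$ from Theorem~\ref{waveriesz}.

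Second, I would establish the upper bound in (\ref{viscors}). Since $\{e^{i\lambda_n t}\psi_n\}$ is a Bessel system in $\lbot$ (this follows from the same Lemma~\ref{quasio}/Lemma~\ref{abdd} machinery run with the opposite inequality in Cauchy--Schwarz — indeed the constant $C$ in (\ref{viscors}) comes out as $2T + 4R$ by the same computation that gave the lower bound, only keeping $+$ signs), and since $z_n = e^{i\lambda_n(t-T)} + r_n$ with $\sum |a_n|^2 \int_0^T\int_\bomega |r_n(t)\psi_n(x)|^2 \le \|M\|_*^2 \, T \sum|a_n|^2 \int_\bomega|\psi_n|^2$ — wait, this needs the $\psi_n$ to be uniformly $L^2(\bomega)$-bounded, which is exactly the content of Lemma~\ref{quasio} with $j=k$ combined with a lower bound on $m\cdot\nu$; one gets $\int_\bomega |\psi_n|^2 \le 2/c_\Omega$ where $c_\Omega = \min_{\bomega}(m\cdot\nu)$ provided $x_0$ is chosen so this is positive, or more robustly from the observability constant itself applied to a single mode. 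With uniform Bessel bounds for both $\{e^{i\lambda_n t}\psi_n\}$ and $\{r_n\psi_n\}$, the triangle inequality in $\lbot$ gives the upper bound with an explicit $C$.

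Third, for the lower bound I would combine Theorem~\ref{waveriesz} with the perturbation. Writing $\sum a_n z_n \psi_n = e^{-i\lambda_n T}\!\cdot\!(\sum a_n e^{i\lambda_n t}\psi_n)$-type main term plus $\sum a_n r_n \psi_n$, and noting $|e^{-i\lambda_n T}| = 1$ so the phase can be absorbed into $a_n$, the main term has $\lbot$-norm$^2$ at least $\tfrac{2(T-2R)}{C_\Omega}\sum|a_n|^2$ by Theorem~\ref{waveriesz}; by the reverse triangle inequality the full norm is at least $(\sqrt{2(T-2R)/C_\Omega} - \sqrt{B_r})\sqrt{\sum|a_n|^2}$ where $B_r$ is the Bessel bound of the remainder system. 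This is the crux and the main obstacle: one needs $B_r < 2(T-2R)/C_\Omega$. For a fixed geometry and fixed $T>2R$ this is \emph{not} automatic, since $B_r$ depends on $\|M\|_{H^2}$ and $T$. The honest fix is the Gr\"onwall/iteration argument on the Volterra equation for $r_n$: it shows $B_r \le \varepsilon(T, \|M\|_{H^2})$ can be made as small as one likes either by taking $T$ close to $2R$ after rescaling, or — following the approach in \cite{loreti12,pandolfi15} — by first proving the result on a short time interval and then propagating via the group structure / finite speed of propagation, or simply by invoking that the remainder system is not merely Bessel but can be shown to be a \emph{compact} perturbation (the $r_n\psi_n$ have an extra decay in $n$ coming from the $\lambda_n^{-1}$ left over after the integration by parts), so that $\{z_n\psi_n\}$ is automatically a Riesz--Fischer sequence once $\{e^{i\lambda_n t}\psi_n\}$ is and the combined system is minimal — the latter following from the linear independence results of \cite{hassell02,barnett11,triggiani08,xu12} cited earlier. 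I expect the write-up to follow the compact-perturbation route, reducing everything to Theorem~\ref{waveriesz} plus the uniform Volterra estimate $\|r_n\|_\infty + \|r_n'\|_\infty \le C_M \lambda_n^{-1}$ (note the gained negative power of $\lambda_n$), which makes $\{r_n\psi_n\}$ relatively compact with respect to $\{e^{i\lambda_n t}\psi_n\}$ and hence preserves the Riesz--Fischer property.
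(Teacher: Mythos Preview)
Your overall architecture---perturb off Theorem~\ref{waveriesz} via a Paley--Wiener/compact-perturbation argument, using that $z_n$ differs from a pure exponential by a remainder with an extra $\lambda_n^{-1}$---is exactly the paper's strategy. But two load-bearing pieces are missing, and without them the argument does not close in dimension $d>1$.

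First, you never explain \emph{how} the decay $\|r_n\|_\infty \lesssim \lambda_n^{-1}$ turns into smallness (or compactness) of the operator $\{a_n\}\mapsto \sum a_n r_n(t)\psi_n(x)$. Since $r_n(t)$ depends on $n$, you cannot simply factor it out and invoke a Bessel bound for $\{\psi_n\}$; and your fallback $\int_\bomega|\psi_n|^2\le 2/c_\Omega$ with $c_\Omega=\min_{\bomega}(m\cdot\nu)$ assumes $\Omega$ is star-shaped, which is not hypothesized. The paper's substitute is a genuine new estimate (Lemma~\ref{psib}): for any finite $\{a_n\}$,
\[
\int_\bomega\Bigl|\sum a_n\psi_n\Bigr|^2\,dS \le C_\alpha\Bigl(\sum|a_n|^2\Bigr)^{1/2}\Bigl(\sum|\lambda_n a_n|^2\Bigr)^{1/2},
\]
proved via a smooth vector field $\alpha$ with $\alpha\cdot\nu\ge 1$ (from \cite{barnett11}) and a commutator $[V,\Delta]$ bound. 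Applying this pointwise in $t$ with $a_n$ replaced by $a_n r_n(t)$, the extra factor of $\lambda_n$ on the right is exactly cancelled by the $\lambda_n^{-1}$ decay of $r_n$ (the paper uses the $L^2$-in-$t$ version, equation~(\ref{zest})), yielding $\|\sum_{|n|\ge k} a_n r_n\psi_n\|^2\le C\lambda_k^{-1}\sum|a_n|^2$. Choosing $k$ large makes this smaller than the Riesz--Fischer constant of $\{e^{(\gamma+i\lambda_n)t}\psi_n\}$, so the Paley--Wiener condition (\ref{percond}) holds outside the finite set $J=\{|n|<k\}$. This lemma is the heart of the dimension-free extension and is absent from your proposal.

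Second, because the closeness condition only holds outside a finite $J$, the Paley--Wiener lemma (Lemma~\ref{perr}) requires separately that $\{z_n\psi_n\}$ be $\ell^2$-independent. You gesture at ``minimality'' via the eigenfunction references, but those concern $\{\psi_n\}$ alone, not $\{z_n\psi_n\}$. The paper obtains $\ell^2$-independence from the unique continuation result of Kim~\cite{kim93} for the full visco-elastic equation: if $\partial_\nu y=0$ on $\bomega\times[0,T]$ with $T>2R$ then $y\equiv 0$, forcing all $c_n=0$. Two smaller points: the paper compares $z_n$ to $e^{(\gamma+i\lambda_n)t}$ with a shift $\gamma\in\bbc$ (needed for the decay~(\ref{zest}) as in \cite{loreti12}), not to $e^{i\lambda_n(t-T)}$; and the upper Riesz bound is obtained either by rerunning the Theorem~\ref{waveriesz} computation with the vector field $\alpha$ in place of $m$, or equivalently from the known regularity estimate of \cite{lasiecka86}, not from a star-shapedness assumption.
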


The notion of a Riesz sequence is slightly stronger than that of a Riesz-Fisher sequence (\ref{rfseq}), we simply add the upper inequality. Nonetheless, the lower inequality is enough to imply observability of (\ref{viscodual}) which gives exact controllability for the dual system.

Our approach is similar to \cite{loreti12} and \cite{pandolfi15} in the sense that we will argue that $\{z_n\psi_n\}$ is in a certain sense ``close'' to $\{e^{i\lambda_n t} \psi_n\}$ which we already know to be a Riesz-Fisher sequence (Theorem \ref{waveriesz}). In \cite{loreti12}, it is shown that there exists $C_1>0$ such that
	\begin{equation}\label{zest} \int_0^T |z_n(t) - e^{(\gamma + i \lambda_n) t}|^2 \, dt \le \dfrac{C_1}{\lambda_n^2} \quad \forall \, t \in [0,T] \end{equation}
for some $\gamma \in \bbc$ in the special case where $\lambda_n=n$. However, there is no crucial role played by $n$ in the computations so (\ref{zest}) can be easily verified with general $\lambda_n$. The key in \cite{loreti12} is that when $\lambda_n=n$, $\{z_n\}$ and $\{e^{\gamma + i \lambda_n t} \}$ are quadratically close, which means
	\[ \sum_n \int_0^T |z_n(t)-e^{(\gamma + i\lambda_n) t} |^2 \, dt < \infty \]
In \cite{pandolfi15}, the decay (\ref{zest}) is improved to $\lambda_n^{-4}$ so quadratically closeness follows from Weyl's lemma when $d \le 3$. We do not expect to be able to extend the quadratically close property to arbitrary dimensions. Rather, we incorporate the estimates on $\{\psi_n\}$ given below in Lemma \ref{psib} to show that $\{z_n\psi_n\}$ and $\{e^{(\gamma + i \lambda_n) t} \psi_n\}$ are Paley-Weiner close. The Theorem~\ref{viscoriesz} will then become a consequence of the following variation of the classical Paley-Weiner theorem \cite{young01}:
%The result will then follow from this variation on the classical Paley-Weiner theorem \cite{young01}:
\begin{lemma}\label{perr}
Let $\{e_n\}$ be a Riesz sequence in a Hilbert space $\calh$ and $\{f_n\} \subseteq \calh$ be an $\ell^2$-independent sequence, i.e. any $\{c_n\} \in \ell^2$ such that $\sum c_nf_n=0$ implies $c_n=0$ for all $n$. If there exists $q \in (0,1)$ and a finite set of indices $J$ such that
	\begin{equation}\label{percond} \left\| \sum_{n \not\in J} a_{n}(e_{n}-f_{n}) \right\|^2 \le q \left\| \sum_{n} a_{n}e_{n} \right\|^2 \end{equation}
for all finite sequences $\{a_{n}\}$, then $\{f_n\}$ is also a Riesz sequence.
\end{lemma}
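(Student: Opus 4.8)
The plan is to establish the lemma in two moves: a small-perturbation step that replaces $\{e_n\}$ by an auxiliary system agreeing with $\{f_n\}$ off the finite set $J$, followed by a finite-rank correction that installs the true vectors $f_n$ for $n\in J$ --- the latter being where the $\ell^2$-independence hypothesis is spent.

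First I would fix Riesz bounds $A,B>0$ for $\{e_n\}$ and introduce the hybrid system $g_n=f_n$ for $n\notin J$ and $g_n=e_n$ for $n\in J$. Since $\sum a_n(e_n-g_n)=\sum_{n\notin J}a_n(e_n-f_n)$, hypothesis (\ref{percond}) says precisely that $\{g_n\}$ is a $\sqrt q$-perturbation of the Riesz sequence $\{e_n\}$ in the Paley-Weiner sense; the triangle inequality then gives $(1-\sqrt q)\|\sum a_n e_n\|\le\|\sum a_n g_n\|\le(1+\sqrt q)\|\sum a_n e_n\|$ for all finite $\{a_n\}$, and composing with the bounds for $\{e_n\}$ shows $\{g_n\}$ is itself a Riesz sequence (the standard stability theorem, cf.\ \cite{young01}). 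In particular its subsystem $\{g_n\}_{n\notin J}=\{f_n\}_{n\notin J}$ is a Riesz basis for its closed linear span $\calh_0:=\overline{\spann}\{f_n:n\notin J\}$, so that $\{c_n\}\mapsto\sum_{n\notin J}c_nf_n$ is a topological isomorphism of $\ell^2$ onto $\calh_0$.

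Next I would pass from $\{g_n\}$ to $\{f_n\}$, which differ only at the finitely many indices in $J$. The upper Riesz inequality for $\{f_n\}$ is routine: bound $\|\sum a_nf_n\|$ by $\|\sum_{n\notin J}a_nf_n\|$ plus $\sum_{n\in J}|a_n|\,\|f_n\|$ and use that $\{f_n\}_{n\notin J}$ is Bessel and $J$ finite; in particular the synthesis operator $S:\{a_n\}\mapsto\sum_n a_nf_n$ extends boundedly to all of $\ell^2$. For the lower inequality I would argue by contradiction: if it failed, I could choose $a^{(k)}$ with $\|a^{(k)}\|_{\ell^2}=1$ and $\|Sa^{(k)}\|\to0$, split $a^{(k)}=p^{(k)}\oplus r^{(k)}$ into its $J$-part and its complement, and note that $\sum_{n\in J}p^{(k)}_nf_n$ ranges in a fixed finite-dimensional subspace; after passing to a subsequence $p^{(k)}\to p$, whence $\sum_{n\notin J}r^{(k)}_nf_n\to-\sum_{n\in J}p_nf_n$, and the isomorphism property above forces $r^{(k)}\to r$ in $\ell^2$. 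Then $d:=p\oplus r$ satisfies $\|d\|_{\ell^2}^2=\lim(\|p^{(k)}\|^2+\|r^{(k)}\|^2)=1$ while $\sum_n d_nf_n=0$, contradicting the $\ell^2$-independence of $\{f_n\}$; hence a positive lower bound exists and $\{f_n\}$ is a Riesz sequence.

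The one genuinely delicate point is this last limiting argument: one must extract $\ell^2$-norm convergence (not merely weak convergence) of the coefficients supported off $J$, which is exactly what the Riesz-basis property of $\{f_n\}_{n\notin J}$ on $\calh_0$ delivers, and one must ensure the limiting relation $\sum_n d_nf_n=0$ is nontrivial, which is secured by carrying the normalization $\|p^{(k)}\|^2+\|r^{(k)}\|^2\equiv1$ through to the limit. The perturbation estimate in the first step and the finite-dimensional compactness in the second are otherwise entirely standard.
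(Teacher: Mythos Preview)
The paper does not supply a proof of this lemma at all: it is stated as ``a variation of the classical Paley--Weiner theorem'' with a reference to \cite{young01}, and the paper then proceeds directly to verify the three hypotheses (i)--(iii) in its application. So there is no paper proof to compare against.

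Your argument is correct and is essentially the standard one for this type of statement. The two-step structure --- first absorb the small perturbation off $J$ via the Paley--Wiener inequality to get that the hybrid system $\{g_n\}$ (and hence $\{f_n\}_{n\notin J}$) is Riesz, then repair the finitely many indices in $J$ by a compactness/contradiction argument that spends the $\ell^2$-independence hypothesis --- is exactly how such perturbation lemmas are proved in the nonharmonic Fourier series literature. The point you flag as delicate is the right one: norm convergence of $r^{(k)}$ is precisely what the lower Riesz bound for $\{f_n\}_{n\notin J}$ provides (convergence of $\sum_{n\notin J}r^{(k)}_nf_n$ in $\calh_0$ pulls back to $\ell^2$-convergence of coefficients), and this is what makes the normalization survive in the limit so that the resulting null combination is nontrivial. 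Nothing is missing.
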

Thus the Theorem \ref{viscoriesz} will be established if we can show three conditions hold:
	\begin{itemize}
		\item[(i)] $\{e^{(\gamma + i \lambda_n) t}\psi_n\}$ is a Riesz sequence.
		\item[(ii)] $\{z_n\psi_n\}$ is an $\ell^2$-independent sequence.
		\item[(iii)] There exists $q \in (0,1)$ and a finite set of indices $J$ such that,
			\[ \left\| \sum_{n \not\in J} a_n \psi_n \left(z_n - e^{(\gamma + i\lambda_n) t}\right) \right\|^2 \le q \left\| \sum_n a_n \psi_n e^{(\gamma + i\lambda_n) t} \right\|^2 \]
		for all finite sequences $\{a_n\}$ ( Here and henceforth $\|\cdot\|$ denotes the $\lbot$ norm).
	\end{itemize}

\begin{proof}[Proof of (i)]
We first claim that when $T>2R$, $\{\psi_ne^{i\lambda_nt}\}$ is actually a Riesz sequence, i.e. in addition to the lower inequality from Theorem \ref{main}, there exists $C_2>0$ such that
	\[ \int_0^T \int_\bomega \left| \sum a_n \psi_n(x) e^{i \lambda_nt} \right|^2 \, dS \, dt \le C_2 \sum |a_n|^2 \]
for all finite sets of scalars $\{a_n\}$. This can be established in a similar manner to Theorem \ref{waveriesz} but with the operator $A$ (\ref{am}) replaced by $V$ from the proof of Lemma \ref{psib} below. Alternatively, in the setting of Proposition \ref{prop1}, it is equivalent to the following regularity estimate for $w$ solving (\ref{waveeq}) which is well-known \cite{lasiecka86}:
	\[  \int_0^T \int_\bomega \left| \dwdn(x,t) \right|^2 \, dS \, dt \le C_2 \int_\Omega |\nabla w_0(x)|^2 + |w_1(x)|^2 \, dx\]
This is then extended to $\{e^{(\gamma + i \lambda_n) t}\psi_n\}$ by noticing that
	\begin{align} \label{4.6}
	\max\{1,e^{\re(\gamma)T}\} \left\| \sum a_n \psi_n e^{i\lambda_n t} \right\|^2 &\ge \left\| \sum a_n \psi_n e^{(\gamma + i\lambda_n) t} \right\|^2  \\ \nonumber
		&\ge \min\{1,e^{\re(\gamma)T}\} \left\| \sum a_n \psi_n e^{i\lambda_n t} \right\|^2 \end{align}
		for all finite sequences $\{a_n\}$.
\end{proof}

\begin{proof}[Proof of (ii)]
%By the same formulation as Proposition \ref{prop1}, c
Consider a solution $y=\sum c_nz_n\phi_n\lambda_n^{-1}$ to the equation (\ref{viscodual}). In \cite{kim93}, the following unique continuation property is shown: 
\begin{quote}
Let $y$ be a solution to (\ref{viscodual}) such that 
	\[\frac{\pa y}{\pa \nu} =0 \quad \mbox{on} \quad \bomega \times [0,T] \]
If $T>2R$, then $y=0$ on $\Omega \times [0,T]$.
\end{quote}
This, in turn, gives $y(x,T)= y_t(x,T) = 0$ for $x \in \Omega$ so
	\[ 0 = \int_\Omega |\nabla y(x,T)|^2 + |y_t(x,T)|^2\, dx =\left(\sum |c_n|^2-c_n\bar c_{-n}\right) +  \left(\sum |c_n|^2+c_n\bar c_{-n}\right)\]
%	\[ 0 = \int_\Omega |\nabla y(x,T)|^2 \, dx =\sum |c_n|^2-c_n\bar c_{-n} \]
%	\[ 0 = \int_\Omega |y_t(x,T)|^2 \, dx =\sum |c_n|^2+c_n\bar c_{-n} \]
Therefore $c_n=0$ for all $n$.
\end{proof}

We now give the key lemma in establishing (iii).
\begin{lemma} \label{psib}Let $\{\psi_n\}$ be defined as in (\ref{psidef}). Then there exists $C_\alpha$ dependent only the domain $\Omega$ such that for any finite sequence of scalars $\{a_n\}$,
	\begin{equation}\label{psib2} \int_\bomega \left| \sum a_n \psi_n(x) \right|^2 \, dS(x) \le C_\alpha \left(\sum |a_n|^2 \right)^{1/2}\left(\sum |\lambda_na_n|^2 \right)^{1/2} \end{equation}
\end{lemma}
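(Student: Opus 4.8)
The plan is to exhibit $\sum a_n\psi_n$ as the Neumann trace of an $H^1_0(\Omega)$-function and then estimate that trace by a Rellich--Pohozaev identity adapted to a non-radial vector field. First I would reduce to positive indices: since $\psi_{-n}=-\psi_n$, setting $b_n=a_n-a_{-n}$ for $n>0$ gives $\sum_{n\in\zo}a_n\psi_n=\sum_{n>0}b_n\psi_n$, with $\sum_{n>0}|b_n|^2\le 2\sum|a_n|^2$ and $\sum_{n>0}|\lambda_nb_n|^2\le 2\sum|\lambda_na_n|^2$ (using $\lambda_{-n}^2=\lambda_n^2$). For a finite sequence, $u:=\sum_{n>0}\lambda_n^{-1}b_n\phi_n$ is smooth on $\overline\Omega$ and vanishes on $\bomega$; moreover $\pa_\nu u=\sum_{n>0}b_n\psi_n$ on $\bomega$, while orthonormality of $\{\phi_n\}$ gives $\|\nabla u\|_{\lo}^2=\sum_{n>0}|b_n|^2$ and $\|\Delta u\|_{\lo}^2=\sum_{n>0}|\lambda_nb_n|^2$. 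Thus the left-hand side of (\ref{psib2}) equals $\int_\bomega|\pa_\nu u|^2\,dS$, and it suffices to prove $\int_\bomega|\pa_\nu u|^2\,dS\le C\,\|\Delta u\|_{\lo}\,\|\nabla u\|_{\lo}$ for every such $u$, with $C$ depending only on $\Omega$.

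For the identity, use smoothness of $\bomega$ to fix once and for all a smooth, compactly supported vector field $h$ on $\bbr^d$ with $h=\nu$ on $\bomega$, and set $Vu=h\cdot\nabla u$ (the analogue of $A$ in (\ref{am}), so that $Vu=\pa_\nu u$ on $\bomega$). Pairing $\Delta u$ with $\overline{Vu}$ in $\lo$, integrating by parts, and using $\nabla u=(\pa_\nu u)\nu$ together with $h\cdot\nu=1$ on $\bomega$, one obtains a Rellich identity of the form
	\[ \int_\bomega \left|\pa_\nu u\right|^2 dS = 2\re\int_\Omega (\Delta u)\,\overline{Vu}\,dx + E, \]
where $E$ gathers the interior terms carrying the derivatives $\pa_jh_i$ and $\nabla\cdot h$. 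This is precisely the classical computation of \cite{rellich46} for $h(x)=m(x)=x-x_0$ (for which $\pa_jh_i=\delta_{ij}$ and $\nabla\cdot h=d$ are constant), now carried out with a general $h$. Since $h$ is smooth with compact support, $\|h\|_{\infty}$, $\|\nabla h\|_{\infty}$ and $\|\nabla\cdot h\|_{\infty}$ are finite and depend only on $\Omega$; hence $|E|\le C_\Omega\|\nabla u\|_{\lo}^2$ and $\left|\int_\Omega(\Delta u)\overline{Vu}\,dx\right|\le\|h\|_{\infty}\|\Delta u\|_{\lo}\|\nabla u\|_{\lo}$.

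It remains to absorb the $\|\nabla u\|_{\lo}^2$ term. By Cauchy--Schwarz and $\lambda_n\ge\lambda_1>0$ for all $n$,
	\[ \|\nabla u\|_{\lo}^2=\sum_{n>0}|b_n|^2\le \lambda_1^{-1}\Big(\sum_{n>0}|b_n|^2\Big)^{1/2}\Big(\sum_{n>0}|\lambda_nb_n|^2\Big)^{1/2}=\lambda_1^{-1}\|\nabla u\|_{\lo}\,\|\Delta u\|_{\lo}, \]
so $\int_\bomega|\pa_\nu u|^2\,dS\le\big(2\|h\|_{\infty}+C_\Omega\lambda_1^{-1}\big)\|\Delta u\|_{\lo}\,\|\nabla u\|_{\lo}$. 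Feeding back $\|\Delta u\|_{\lo}\,\|\nabla u\|_{\lo}\le 2\big(\sum|\lambda_na_n|^2\big)^{1/2}\big(\sum|a_n|^2\big)^{1/2}$ yields (\ref{psib2}) with $C_\alpha=2\big(2\|h\|_{\infty}+C_\Omega\lambda_1^{-1}\big)$, depending on $\Omega$ only. (The $a_n$ may be complex while the $\phi_n$ are real, so one applies the identity to the real and imaginary parts of $u$ separately and recombines by Cauchy--Schwarz in $\bbr^2$.) \textbf{The main obstacle} is the Rellich identity for a non-radial field: one must verify that replacing $m$ by a general smooth $h$ produces only error terms $E$ of size $\|\nabla u\|_{\lo}^2$ with constant governed by $\Omega$, and that the normalization $h|_\bomega=\nu$ can indeed be arranged — which is exactly where smoothness of $\bomega$ is used.
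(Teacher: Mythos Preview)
Your argument is correct and is essentially the paper's own proof: both pick a smooth vector field on $\overline\Omega$ that is outward-pointing on $\bomega$ (you take $h|_\bomega=\nu$, the paper takes $\alpha\cdot\nu\ge1$ via a lemma from \cite{barnett11}), set $Vu=h\cdot\nabla u$, and convert $\int_\bomega|\pa_\nu u|^2\,dS$ into interior integrals by Green's formula, with the commutator $[V,\Delta]$ furnishing the $O(\|\nabla u\|_{\lo}^2)$ error that you absorb by $\|\nabla u\|_{\lo}^2\le\lambda_1^{-1}\|\nabla u\|_{\lo}\|\Delta u\|_{\lo}$. The only cosmetic differences are that the paper keeps both signs of $n$ (tracking the factor of $2$ you avoided by reducing to $b_n=a_n-a_{-n}$) and organizes the integration by parts through the commutator identity rather than the ``Rellich identity plus error'' language.
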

The estimate (\ref{psib2}) may be viewed as stating some degree of orthogonality for $\{\psi_n\}$. In proving this, we follow the techniques in \cite{barnett11,xu12} utilizing the following lemma.
\begin{lemma}[\cite{barnett11} Appendix A]
Let $\Omega$ be bounded with piecewise smooth boundary. Then, there exists a smooth vector field $\alpha$, defined on a neighborhood of $\overline{\Omega}$
such that
	\[ \alpha(x) \cdot \nu(x) \ge 1 \]
for almost every $x \in \bomega$.
\end{lemma}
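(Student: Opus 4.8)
The plan is to build $\alpha$ out of \emph{constant} vector fields on small neighborhoods and then glue the pieces together with a partition of unity, exploiting the fact that the constraint $\alpha(x)\cdot\nu(x)\ge 1$ is \emph{affine} in $\alpha$ and is therefore preserved under convex combinations. Since $\bomega$ is compact, it suffices to produce, for each boundary point $p$, a neighborhood $U_p$ and a constant vector $v_p\in\bbr^d$ such that $v_p\cdot\nu(q)\ge 1$ for almost every $q\in U_p\cap\bomega$. I would first dispose of the easy case: for $p$ in the relatively open smooth part of $\bomega$, the normal $\nu$ is continuous, so choosing $U_p$ small enough that $\nu(q)\cdot\nu(p)\ge\tfrac12$ on $U_p\cap\bomega$ and setting $v_p=2\nu(p)$ gives $v_p\cdot\nu(q)\ge1$ there; being constant, $v_p$ is trivially smooth.

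The main obstacle is the singular set, where finitely many smooth faces meet along lower-dimensional edges and corners. The key geometric claim is that at each such point $p$ the limiting outward normals $\nu_1(p),\dots,\nu_m(p)$ of the faces meeting at $p$ lie in a common open half-space, i.e. there is a vector $v$ with $v\cdot\nu_j(p)>0$ for every $j$. For a transversal corner this follows from the wedge geometry: in the two-dimensional plane normal to an edge, an interior opening angle $\theta\in(0,2\pi)$ produces two outward normals symmetric about the wedge's axis, and a vector along that axis (pointing outward) pairs strictly positively with both; the only excluded configurations are genuine cusps $\theta\to0$ or $\theta\to2\pi$, which piecewise smoothness rules out. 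Having found such a $v$, I would rescale it so that $v\cdot\nu_j(p)\ge2$ for all $j$ and then shrink $U_p$ so that, by continuity of $\nu$ up to the closure of each face, $v\cdot\nu(q)\ge1$ for almost every $q\in U_p\cap\bomega$; set $v_p=v$.

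Finally I would globalize. By compactness extract a finite subcover $U_{1},\dots,U_{N}$ of $\bomega$ from $\{U_p\}$, adjoin an open set $U_0$ with $U_0\cap\bomega=\emptyset$ so that $\{U_i\}_{i=0}^N$ covers a neighborhood of $\overline{\Omega}$, and take a smooth partition of unity $\{\rho_i\}_{i=0}^N$ subordinate to it. Define
\[ \alpha=\sum_{i=1}^N \rho_i\, v_i, \]
which is smooth on a neighborhood of $\overline{\Omega}$ as a finite sum of smooth scalars times constant vectors. Since $\operatorname{supp}\rho_0\subseteq U_0$ is disjoint from $\bomega$, we have $\sum_{i=1}^N\rho_i(q)=1$ for $q\in\bomega$, and for almost every such $q$,
\[ \alpha(q)\cdot\nu(q)=\sum_{i=1}^N \rho_i(q)\,\bigl(v_i\cdot\nu(q)\bigr)\ge\sum_{i=1}^N\rho_i(q)=1, \]
because $v_i\cdot\nu(q)\ge1$ whenever $\rho_i(q)\neq0$ (as $\operatorname{supp}\rho_i\subseteq U_i$). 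This is exactly the asserted inequality. I expect the affine/convexity observation to make the gluing essentially automatic; the genuine work lies in the corner analysis of the previous paragraph, and for a smooth (cornerless) domain that step is vacuous.
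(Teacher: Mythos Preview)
The paper does not supply its own proof of this lemma; it simply quotes the result from \cite{barnett11}, Appendix~A, and uses it as a black box in the proof of Lemma~\ref{psib}. So there is no ``paper's proof'' to compare against. Your reconstruction via local constant vectors and a partition of unity is the natural one, and the convexity observation (that the condition $\alpha\cdot\nu\ge1$ is affine in $\alpha$, hence preserved under the convex combination $\sum\rho_i v_i$) is exactly the right mechanism for gluing. For domains with smooth boundary---which is in fact all that the present paper needs, since $\Omega$ is assumed to have smooth $\bomega$---your argument is complete and correct.

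There is, however, a genuine soft spot in your treatment of the singular set. You assert that ``piecewise smoothness rules out'' the degenerate configurations $\theta\to0$ and $\theta\to2\pi$, but this is not true in general: a piecewise smooth boundary can perfectly well have cusps (e.g.\ the planar region between $y=x^2$ and $y=-x^2$ near the origin has two smooth arcs meeting at a zero-angle cusp). At an outward cusp or slit ($\theta\to2\pi$) the two limiting outward normals are antipodal, so no constant vector---indeed no \emph{continuous} vector field---can satisfy $\alpha\cdot\nu\ge1$ on both sides simultaneously, and the lemma is simply false there. At an inward cusp ($\theta\to0$) the constant-vector approach also breaks down, though a non-constant field may still work. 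So your corner step implicitly requires an extra hypothesis such as a Lipschitz condition or a uniform transversality assumption on the faces; the cited reference \cite{barnett11} presumably makes this precise. For the purposes of the present paper this is moot, since $\bomega$ is smooth and your ``easy case'' already covers everything, but you should not claim the piecewise-smooth case without qualifying what configurations are excluded.
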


\begin{proof}[Proof of Lemma \ref{psib}]
Define $V:H^1_0(\Omega) \to L^2(\Omega)$ by $(Vu)(x) = \alpha(x) \cdot \grad u(x)$.
First we claim that there exists $C_\alpha>0$ such that
	\[ \left| \int_\Omega u[V, \Delta]\bar u \, dx \right| \le C_\alpha \|\grad u\|^2 \]
for any $u \in H^3(\Omega) \cap H^1_0(\Omega)$. Indeed, using Einstein notation summing over $i,j=1,2,\ldots,d$
	\begin{align*}
		\Delta Vu &= \partial_{ii} \left( \alpha_j (\partial_j u) \right) \\
			%&=\partial_i \left(  (\partial_i\alpha_j) (\partial_ju) + \alpha_j (\partial_{ji} u) \right) \\
			&= (\partial_{ii}\alpha_j)(\partial_ju) + 2 (\partial_i\alpha_j)(\partial_{ij} u) + \alpha_j (\partial_{jii}u) \\
			&= V \Delta u + (\partial_{ii}\alpha_j)(\partial_ju) + 2 (\partial_i\alpha_j)(\partial_{ij} u)
	\end{align*}
	Integrating by parts once and applying the Poincar\'e inequality yields
	\begin{align*} \left| \int_\Omega uV\Delta \bar u- u \Delta V \bar u \, dx \right|&= \left|\int_\Omega u(\partial_{ii}\alpha_j)(\partial_j\bar u) + 2 u(\partial_i\alpha_j)(\partial_{ij} \bar u) \, dx \right|\\
		&= \left| \int_\Omega u(\partial_{ii}\alpha_j)(\partial_j\bar u) - 2 \left[(\partial_i \bar u)(\partial_i\alpha_j)+ u (\partial_{ii}\alpha_j)\right](\partial_{j} \bar u) \, dx \right|\\
		&\le C_\alpha \int_\Omega | \grad u |^2\, dx 
	\end{align*}
\iffalse
	\begin{align*}
		\Delta Vu &= \sum_{i=1}^d \partial_{ii} \left( \sum_{j=1}^d \alpha_j (\partial_j u) \right) =\sum_{i=1}^d \partial_i \left( \sum_j (\partial_i\alpha_j) (\partial_ju) + \alpha_j (\partial_{ji} u) \right) \\
			&= \sum_{i=1}^d \sum_{j=1}^d (\partial_{ii}\alpha_j)(\partial_ju) + 2 (\partial_i\alpha_j)(\partial_{ij} u) + \alpha_j (\partial_{jii}u) \\
			&= V \Delta u + \sum_i \sum_j (\partial_{ii}\alpha_j)(\partial_ju) + 2 (\partial_i\alpha_j)(\partial_{ij} u)
	\end{align*}
	Integrating by parts once yields
	\begin{align*} \int_\Omega uV\Delta \bar u- u \Delta V \bar u &= \int_\Omega \sum_{i=1}^d \sum_{j=1}^d u(\partial_{ii}\alpha_j)(\partial_j\bar u) + 2 u(\partial_i\alpha_j)(\partial_{ij} \bar u) \, dx \\
		&= \int_\Omega \sum_{i,j=1}^d u(\partial_{ii}\alpha_j)(\partial_j\bar u) - 2 \left[(\partial_i \bar u)(\partial_i\alpha_j)+ u (\partial_{ii}\alpha_j)\right](\partial_{j} \bar u) \, dx \\
		&\le C_\alpha \int_\Omega | \grad u |^2\, dx 
	\end{align*}
\fi
Take $u = \sum a_n\phi_n\lambda_n^{-1}$ for a finite set of scalars $\{a_n\}$. Notice that $\|\nabla u\|^2 \le 2\sum |a_n|^2$ and $\|\Delta u\|^2 \le 2\sum |\lambda_na_n|^2$ (the factor of 2 is due to the negative indices as in Lemma \ref{abdd}). Then, using Cauchy-Schwartz and the above estimates on $V$, we have
	\begin{align*}
		\int_\bomega \left|\dudn\right|^2 \, dS &\le \int_\bomega (\alpha \cdot \nu) \left|\dudn\right|^2 \, dS = \int_\bomega \dudn V\bar u \, dS \\
			%& = \int_\bomega \left( Vu \dudn - \dfrac{\partial Vu}{\partial \nu} u \right) \, dS \\
			&= \int_\Omega \Delta u V\bar u- u \Delta V\bar u  \, dx \\
			&=  \int_\Omega \Delta uV\bar u - uV\Delta \bar u + u[V,\Delta] \bar u \, dx \\
			%&= C_\alpha \int_\Omega  Vu\Delta u - uV\Delta u \, dx -C_\alpha \int_\Omega u \Delta u \,  dx\\
			&= \int_\Omega  \Delta uV\bar u +(\nabla \cdot \alpha) u\Delta \bar u + Vu \Delta \bar u + u[V,\Delta]\bar u \, dx \\
			&\le C_\alpha \left(\sum |a_n|^2 \right)^{1/2} \left( \sum |\lambda_na_n|^2 \right)^{1/2}
	\end{align*}
\end{proof}

\begin{proof}[Proof of (iii)]
Let $c_\gamma=(T-2R)\min\{1,e^{\re(\gamma)T}\}/C_\Omega$ be the constant from the lower Riesz sequence inequality (\ref{4.6}) for $\{e^{(\gamma + i\lambda_n)t}\psi_n\}$. Since $\lambda_n \to \infty$, there exists $k \in \bbn$ such that
	\[\dfrac{c_\gamma^{-1} C_\alpha C_1}{\lambda_k} < 1 \]
Take $J = \{ j : |j| < k \}$. Applying Lemma \ref{psib} and then the estimate (\ref{zest}), we have
	\begin{align*}
		\int_0^T \int_\bomega &\left| \sum_{|n| \ge k} a_n \psi_n(x) \left(z_n(t)-e^{(\gamma+i\lambda_n )t}\right) \right|^2 \\
			%& \le \int_0^T C_\alpha \left( \sum_{|n| \ge k} |a_n(z_n-e^{(\gamma+i\lambda_n )t})|^2 \right)^{1/2} \left( \sum_{|n| \ge k} |\lambda_na_n(z_n-e^{(\gamma+i\lambda_n )t})|^2 \right)^{1/2} \\
			&\le C_\alpha \left( \int_0^T \sum_{|n| \ge k} |a_n(z_n-e^{(\gamma+i\lambda_n )t})|^2 \right)^{1/2} \left( \int_0^T\sum_{|n| \ge k} |\lambda_na_n(z_n-e^{(\gamma+i\lambda_n )t})|^2 \right)^{1/2} \\
			&\le C_\alpha C_1 \lambda_k^{-1} \sum |a_n|^2 \\
			&\le C_\alpha C_1 c_\gamma^{-1} \lambda_k^{-1} \left\| \sum a_n e^{(\gamma+i\lambda_n )t} \psi_n \right\|^2
	\end{align*}
\end{proof}

\begin{bibdiv} \begin{biblist}

\bib{avdonin95}{book}{
  title={Families of exponentials: the method of moments in controllability problems for distributed parameter systems},
  author={Avdonin, S. A.},
  author={ Ivanov, S. A.}
  %volume={48},
  year={1995},
  %series={CBMS Regional Conference Series in Mathematics}
  publisher={Cambridge University Press}
}

\bib{b-l-r}{article}{
  title={Sharp sufficient conditions for the observation, control, and stabilization of waves from the boundary},
  author={Bardos, C.},
  author={Lebeau, G.},
  author={Rauch, J.},
  journal={SIAM J. Control Optim.},
  volume={30},
  number={5},
  pages={1024--1065},
  year={1992}
}

\bib{barnett11}{article}{
   author={Barnett, A. H.},
   author={Hassell, A.},
   title={Boundary Quasi-Orthogonality and Sharp Inclusion Bounds for Large Dirichlet Eigenvalues},
   journal={SIAM J. Numer. Anal.},
   volume={49},
   date={2011},
   number={3},
   pages={1046--1063}
}

\bib{dolecki77}{article}{
  title={ A General Theory of Observation and Control},
  author={S. Dolecki},
  author={D. L. Russell},
  journal={SIAM J. Control Optim.},
  volume={15},
  issue={2},
  pages={185--220},
  year={1977},
}

\bib{fu09}{article}{
  title={Controllability and observability of a heat equation with hyperbolic memory kernel},
  author={Fu, X.},
  author={Yong, J.},
  author={Zhang, X.},
  journal={J. Differential Equations},
  volume={247},
  number={8},
  pages={2395--2439},
  year={2009},
  %publisher={Elsevier}
}

\bib{gllt}{book}{
  author={R. Gulliver}, 
  author={I. Lasiecka}, 
  author={W. Littman}, 
  author={R. Triggiani}, 
  title={The Case for Differential Geometry in the Control of Single and Coupled PDEs: The Structural Acoustic Chamber, Geometric Methods in Inverse Problems and PDE Control}, 
%  series={IMA Volumes in Mathematics and Ots Applications}, 
  publisher={Springer}, 
  year={2004}
}

\bib{hansen}{article}{
  title={Exact controllability of a beam in an incompressible inviscid fluid},
  author={Hansen, S. W.},
  author={Lyashenko, A. A.},
  journal={Discrete Contin. Dyn. Syst.},
  volume={3},
  pages={59--78},
  year={1997}
}

\bib{hassell02}{article}{
  title={Upper and Lower Bounds for Normal derivatives of Dirichlet Eigenfunctions},
  author={A. Hassell},
  author={T. Tao.},
  journal={Math. Res. Lett.},
  volume={9},
%  issue={},
  pages={289--305},
  year={2002},
}

\bib{ho86}{article}{
  title={Observabilit\'e fronti\`ere de l'\'equation des ondes},
  author={L. F. Ho},
  journal={C.R. Math. Acad. Sci. Paris},
  volume={302},
%  issue={},
  pages={443--446},
  year={1986},
}

\bib{kim93}{article}{
  author={J.U. Kim},
  title={Control of a second order integro-differential equation},
  journal={SIAM J. Control Optim.},
  volume={31},
  pages={101--110},
  year={1993}
} 

\bib{komornik95}{book}{
  author={V. Komornik}, 
  title={Exact Controllability and Stabilization: the Multiplier Method},
  %series={Research in Applied Mathematics},
  publisher={Masson/Wiley},
  year={1994}
}

\bib{komornik05}{book}{
  author={V. Komornik}, 
  author={P. Loreti}, 
  title={Fourier Series in Control Theory},
  publisher={Springer},
  year={2005},
}

\bib{lasiecka86}{article}{
  title={Non homogeneous boundary value problems for second order hyperbolic operators},
  author={I. Lasiecka}
  author={J.L. Lions}
  author={R. Triggiani},
  journal={J. Math. Pures Appl.},
  volume={65},
  number={2},
  pages={149--192},
  year={1986},
%  publisher={Elsevier}
}

\bib{L-Tbook}{book}{
  author={I. Lasiecka},
  author={R. Triggiani}, 
  title={Differential and Algebraic Riccati Equations with Applications to Boundary/Point Control Problems: Continuous Theory and Approximation Theory}, 
  publisher={Springer Verlag LNICS},
  volume={164}, 
  year={1991}
}

\bib{ltz}{incollection}{
  author={I. Lasiecka}, 
  author={R. Triggiani},
  author={X. Zhang}, 
  title={Nonconservativewave equations with unobserved Neumann B.C.: global uniqueness and observability in one shot},
  booktitle={Differential Geometric Methods in the Control of Partial Differential Equations} 
%(R. Gulliver,W. Littman, and R. Triggiani, eds.), 
  bookseries={Contemporary Mathematics}, 
  bookpublisher={American Mathematical Society}, 
%Providence, RI, 
  bookvolume={263},
  pages={227--326}, 
  year={2000}
}

\bib{Lionsbook}{book}{
  author={J. L. Lions}, 
  title={Controllabilite Exacte, Stabilization et Perturbation des System Distribues},
  volume={1}, 
  publisher={Masson, Paris}, 
  year={1998}
}

\bib{lions88}{article}{
  author={J. L. Lions}, 
  title={Exact Controllability, Stabilization and Perturbations for Distributed Systems},
  journal={SIAM Rev.},
  volume={30},
  issue={1},
  pages={1--70}, 
  year={1988}
}

\bib{loreti12}{article}{
  author={P. Loreti},
  author={L. Pandolfi},
  author={D. Sforza},
  title={Boundary Controllability and observability of a viscoelstic string},
  journal={SIAM J. Control Optim.},
  volume={50}
  issue={2},
  pages={820--844},
  year={2012}
}

\bib{micu02}{unpublished}{
  author={S. Micu},
  author={E. Zuazua},
  title={An Introduction to the controllability of partial differential equations},
  year={2002}
}

\bib{pandolfi}{article}{
  title={Riesz systems and controllability of heat equations with memory},
  author={Pandolfi, L.},
  journal={Integral Equations and Operator Theory},
  volume={64},
  number={3},
  pages={429--453},
  year={2009}
}

\bib{pandolfi15}{article}{
  title={Sharp control time for viscoelastic bodies},
  author={Pandolfi, L.},
  journal={J. Integral Equations Appl.},
  volume={27},
  number={1},
  pages={103--136},
  year={2015},
  publisher={Rocky Mountain Mathematics Consortium}
}

\bib{rellich46}{article}{
  author={F. Rellich},
  title={Darstellung der Eigenwerte von $\Delta u + \lambda u = 0$ durch ein Randintegral}, 
  journal={Math. Z.},
  volume={46}
  pages={635--636}, 
  year={1940}
}

\bib{russell78}{article}{
  author={D. L. Russell}, 
  title={Controllability and Stabilizability Theory for Linear Partial Differential Equations: Recent Progress and Open Questions}, 
  journal={SIAM Rev.},
  volume={20},
  issue={4},
  pages={639--739}, 
  year={1978}
}

\bib{triggiani08}{article}{
  author={R. Triggiani},
  title={Linear Independence of Boundary Traces of Eigenfunctions of Elliptic and Stokes Operators and Applications},
  journal={Appl. Math. (Warsaw)},
  volume={35},
  issue={4},
  pages={481--512}, 
  year={2008}
}

\bib{xu12}{article}{
  author={X. Xu.},
  title={Upper and lower bounds for normal derivatives of spectral clusters of Dirichlet Laplacian},
  journal={J. Math. Anal. Appl.},
  volume={387}, 
  pages={374--383},
  year={2012}
}

\bib{young01}{book}{
  title={An Introduction to Non-Harmonic Fourier Series, Revised Edition, 93},
  author={Young, R. M.},
  year={2001},
  publisher={Academic Press}
}

\bib{zhang10}{article}{
  author={X. Zhang}, 
  title={A unified controllability/observability theory for some stochastic and deterministic partial differential equations}, 
  journal={Proceedings of the International Congress of Mathematicians, Hyderabad, India}, 
  year={2010}
}

\end{biblist} \end{bibdiv}

\end{document}